\documentclass{amsart}

\usepackage{amsmath,amssymb,mathtools}
\usepackage{enumitem}
\usepackage{booktabs}
\usepackage{xcolor}
\usepackage[hypertexnames=false]{hyperref}
\hypersetup{colorlinks=true,linkcolor=blue,citecolor=blue,urlcolor=blue}

\newcommand{\Z}{\mathbb Z}
\newcommand{\Q}{\mathbb Q}
\newcommand{\R}{\mathbb R}
\newcommand{\HH}{\mathbb H}
\newcommand{\OO}{\mathcal O}
\newcommand{\Norm}{\operatorname N}

\newcommand{\ECal}{\operatorname{ECal}}
\newcommand{\lex}{\lambda_{\mathrm{ex}}}
\newcommand{\Xred}{\mathcal X}
\newcommand{\Xex}{\mathcal X^{\mathrm{ex}}}
\newcommand{\Xone}{\mathcal X_{1}}
\newcommand{\dfn}{\mathfrak d}
\newcommand{\vol}{\operatorname{vol}}
\newcommand{\PGL}{\mathrm{PGL}}
\newcommand{\PSL}{\mathrm{PSL}}
\newcommand{\Lv}{\mathsf m}
\DeclareMathOperator{\GL}{GL}

\newtheorem{theorem}{Theorem}[section]
\newtheorem{proposition}[theorem]{Proposition}
\newtheorem{lemma}[theorem]{Lemma}
\newtheorem{corollary}[theorem]{Corollary}
\theoremstyle{definition}

\theoremstyle{remark}

\numberwithin{equation}{section}

\title[Reduced ideals missed by the reduction algorithm]
{Density and Defect Distribution of Reduced Ideals Missed by the Reduction Algorithm in Real Quadratic Fields}

\author{Mengzhe Guo}
\address{Graduate School of Science and Engineering, Ritsumeikan University, 1-1-1 Nojihigashi, Kusatsu, Shiga 525-8577, Japan}
\email{guozhechan@gmail.com}

\author{Taro Hayashi}
\address{Department of Mathematical Sciences, College of Science and Engineering, Ritsumeikan University, 1-1-1 Nojihigashi, Kusatsu, Shiga 525-8577, Japan}
\email{haya4taro@gmail.com}
\thanks{T. Hayashi is the corresponding author. ORCID: 0009-0005-0145-8042.}

\author{Ryota Sueyoshi}
\address{Graduate School of Science and Engineering, Ritsumeikan University, 1-1-1 Nojihigashi, Kusatsu, Shiga 525-8577, Japan}
\email{ryota.sueyoshi@outlook.jp}

\subjclass[2020]{Primary 11R11; Secondary 11A55, 11E16, 11R29, 37D40}
\keywords{real quadratic fields, reduced ideals,
1-reduced ideals, reduction algorithm, defect distribution, equidistribution}

\begin{document}

\begin{abstract}
Schoof asked how many reduced Arakelov divisors
cannot arise from the reduction algorithm.
Tian and Tran subsequently characterized
the image of this algorithm in real quadratic
fields as precisely the $1$-reduced ideals.
We give a quantitative answer to Schoof's
question in the real-quadratic setting by
determining the asymptotic proportion of
reduced ideals missed by the algorithm.
A key arithmetic input is the criterion
that a reduced ideal with associated reduced
form $(A,b,-r)$ fails to be $1$-reduced
if and only if $A-r<b<2(A-r)$.
Hence the missed ideals occupy an explicit
region of the natural extension of the
Gauss map whose relative measure is
$\lex=\frac{\log(4/3)}{\log4}$.
In particular, the missed region occupies
exactly half of the digit-one cylinder
with respect to the invariant measure.
We establish this limiting proportion
in two settings.
A lattice-point count gives the average
asymptotic over fundamental discriminants
$\Delta\le X$, with error
$O(X^{-1/4+\varepsilon})$.
For individual real quadratic fields,
Duke's equidistribution theorem yields
$\ECal(\Delta)/\kappa(\Delta)\to\lex$
as $\Delta\to\infty$.
We also determine the limiting distribution
of the canonical defect $k/\sqrt\Delta$
of the missed ideals.
This distribution is supported on
$[0,1/\sqrt3-1/2]$, and its lower tail
is asymptotic to
$\frac{2}{\log(4/3)}t\log(1/t)$
as $t\to0^+$.
An exact relation between the integer defect
and an endpoint coordinate of the natural
extension allows us to transfer
equidistribution to the defect law.
\end{abstract}

\maketitle

\section{Introduction}\label{sec:intro}

Reduced ideals in real quadratic fields connect
continued fractions, binary quadratic forms,
ideal classes, and Shanks's infrastructure;
see \cite{Buell1989,HalterKoch2013,Schoof2008,Shanks1972}.
In his analysis of algorithms for Arakelov class
groups, Schoof observed that certain reduced
Arakelov divisors cannot occur as outputs of
the reduction algorithm.
He explicitly asked
\emph{``It would be of interest to know how many
such reduced divisors there may be.''}
\cite[p.~492]{Schoof2008}.
Tian and Tran took up this problem and described
their results as a partial answer to Schoof's
question \cite{TianTran2024}.
In real quadratic fields, they characterized
the possible outputs of the reduction algorithm
as exactly the $1$-reduced ideals.
For example, $\Q(\sqrt{73})$ has nine reduced
ideals, but only seven are $1$-reduced.
The present paper gives a quantitative answer
to Schoof's question in the real-quadratic
setting.
More precisely, we ask how many reduced ideals
are inaccessible to the algorithm as the
fundamental discriminant grows, and what
proportion of all reduced ideals they represent.
We answer this both on average over
discriminants and for individual real quadratic
fields.

For a positive fundamental discriminant
$\Delta$, let $\kappa(\Delta)$ be the number
of reduced ideals of $\Q(\sqrt\Delta)$
(the caliber, cf.\ \cite{JunLee2010}),
and let $\ECal(\Delta)$ be the number
of reduced ideals that are not $1$-reduced.
Throughout, $\sum^\flat$ denotes a sum
over positive fundamental discriminants, and
\[
 \lex=\frac{\log(4/3)}{\log4}.
\]

\begin{theorem}\label{thm:intro-average}
Put
$\mathfrak S
 =\frac{11}{16}
 \prod_{p>2}
 \bigl(1-p^{-2}-p^{-3}+p^{-4}\bigr)$.
For every $\varepsilon>0$, we have
\begin{align*}
 \sideset{}{^\flat}\sum_{\Delta\le X}
 \kappa(\Delta)
 &=
 \frac{\mathfrak S\log2}{3}X^{3/2}
 +O_\varepsilon(X^{5/4+\varepsilon}),\\
 \sideset{}{^\flat}\sum_{\Delta\le X}
 \ECal(\Delta)
 &=
 \frac{\mathfrak S\log(4/3)}{6}X^{3/2}
 +O_\varepsilon(X^{5/4+\varepsilon}).
\end{align*}
In particular, the proportion of reduced
ideals missed by the reduction algorithm,
averaged over $\Delta\le X$, is
$\lex+O_\varepsilon(X^{-1/4+\varepsilon})$.
\end{theorem}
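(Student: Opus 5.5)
The plan is to turn both sums into lattice-point counts in a fixed homogeneous cone, restricted to fundamental discriminants by a sieve. Each reduced ideal of $\Q(\sqrt\Delta)$ corresponds bijectively to a reduced form $(A,b,-r)$ with $A,r>0$ and $\Delta=b^2+4Ar$. The reducedness inequalities are $0<b<\sqrt\Delta$ and $\sqrt\Delta-b<2A<\sqrt\Delta+b$, or an equivalent normalization of the paper's convention. Hence
\[
\sideset{}{^\flat}\sum_{\Delta\le X}\kappa(\Delta)
 =\#\{(A,b,r)\in\Z_{>0}^3:\ b^2+4Ar\le X,\ (A,b,r)\in\mathcal C,\ b^2+4Ar\ \text{fundamental}\},
\]
where $\mathcal C\subset\R^3$ is the cone cut out by the reducedness inequalities with $\sqrt\Delta$ replaced by $\sqrt{b^2+4Ar}$. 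These inequalities are homogeneous of degree one in $(A,b,r)$. I will invoke the criterion from the introduction: non-$1$-reduced is equivalent to $A-r<b<2(A-r)$. So $\ECal$ is the same count over the subcone $\mathcal C^{\mathrm{ex}}=\mathcal C\cap\{A-r<b<2(A-r)\}$. The region $\{b^2+4Ar\le X\}\cap\mathcal C$ equals $X^{1/2}$ times the fixed compact region $R=\{b^2+4Ar\le1\}\cap\mathcal C$, and likewise for $R^{\mathrm{ex}}$. Its volume is therefore $\vol(R)X^{3/2}$.

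First I compute the volumes. I will parametrize by $\Delta$ and the two roots of the form; equivalently, I use coordinates $(x,y)$ of the natural extension of the Gauss map. The Jacobian should turn the slice $\{b^2+4Ar=\Delta\}\cap\mathcal C$ into a region carrying the measure $dx\,dy/(1+xy)^2$, scaled by $\Delta^{1/2}$. Integrating $\Delta^{1/2}$ over $d\Delta$ then gives the factor $\frac23X^{3/2}$ times a constant multiple of the invariant measure of the region. The total invariant mass is $\log 2$, which should yield $\vol(R)$ proportional to $\log2$. The missed region has invariant mass $\tfrac12\log(4/3)$, half of the digit-one cylinder, as stated in the abstract. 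The resulting ratio of $\vol(R^{\mathrm{ex}})$ to $\vol(R)$ is $\log(4/3)/(2\log2)=\lex$. I will verify the normalizations $\frac{\log2}{3}$ and $\frac{\log(4/3)}{6}$ by direct integration: integrate over $A$ first for fixed $b,r$, and check against the factor $\frac{\mathfrak S}{\cdots}$.

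Second, I impose fundamentality. A positive $\Delta=b^2+4Ar$ is automatically $\equiv0,1\pmod4$. It is fundamental iff it is either $\equiv1\pmod4$ and squarefree, or equal to $4m$ with $m\equiv2,3\pmod4$ and $m$ squarefree. I write the squarefree indicator as $\sum_{d^2\mid\cdot}\mu(d)$ and split at a parameter $D$. For $d\le D$, I count points of the cone region lying in the residue classes mod $d^2$ (and mod $16$ at the prime $2$) where $b^2+4Ar\equiv0$. By standard geometry of numbers (Davenport's lemma), the count is $\rho(d^2)d^{-6}\vol(R)X^{3/2}+O(\rho(d^2)X)$. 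Here $\rho(q)$ is the number of solutions of $b^2+4Ar\equiv0 \pmod q$, and it satisfies $\rho(p^2)\approx p^4$, so $\rho(d^2)\ll d^{4+\varepsilon}$. Summing over $d\le D$, the main terms assemble into the Euler product. At odd $p$ the local density is $1-\rho(p^2)/p^6$. Counting the solutions of $b^2+4Ar\equiv0\pmod{p^2}$ should give exactly $1-p^{-2}-p^{-3}+p^{-4}$, and the analogous computation at $2$ should give $\frac{11}{16}$ after combining the two admissible $2$-adic shapes. The error from this range is $\sum_{d\le D}d^{4}X\cdot$(bounded) in the crude form. That is too big, so I will instead bound the count for a fixed $d$ by $O(X^{3/2}d^{-2+\varepsilon}+X d^{\varepsilon})$ up to lower-dimensional terms. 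This gives an error $O(DX^{1+\varepsilon})$ and a tail of $O(X^{3/2}D^{-1})$ in the main term.

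The main obstacle is the range $d>D$. There I must bound the number of $(A,b,r)$ in the cone with $d^2\mid b^2+4Ar$ and $d>D$. I will write $b^2+4Ar=d^2m$ with $m\le X/d^2$. For each such value $n\le X$, the number of reduced forms of discriminant $n$ is $\ll n^{1/2+\varepsilon}$, by the divisor bound applied to $4Ar=n-b^2$ over $b<\sqrt n$. This gives a tail bound of $\sum_{d>D}\sum_{m\le X/d^2}(d^2m)^{1/2+\varepsilon}\ll X^{3/2+\varepsilon}D^{-1}$. Balancing this against $DX^{1+\varepsilon}$ at $D=X^{1/4}$ produces the claimed $O_\varepsilon(X^{5/4+\varepsilon})$ for both sums. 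Dividing the two asymptotics then gives the proportion $\lex+O_\varepsilon(X^{-1/4+\varepsilon})$, since the main term of $\sum^\flat\kappa$ has order $X^{3/2}$.
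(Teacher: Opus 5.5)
Your proposal follows the paper's proof essentially step for step. The shared ingredients are:
\begin{itemize}
\item the bijection with integer triples satisfying $|A-r|<b$, and the cone $\{b^2+4Ar\le X\}=X^{1/2}\cdot\{q\le1\}$;
\item the volume computed through natural-extension coordinates, where the paper's Jacobian $dA\,db\,dr=\tfrac{\sqrt Q}{2}\,dQ\,d\nu$ gives $\vol=\nu(B)/3$, and your coordinates $x=1/w_+$, $y=-w_-$ carry the same measure;
\item a M\"obius sieve over odd $d$ with a mod-$16$ condition, producing the factor $\tfrac{11}{16}$ and the Euler factors $1-p^{-2}-p^{-3}+p^{-4}$;
\item Davenport's lemma for $d\le X^{1/4}$, and the divisor bound $\kappa^*(m)\ll m^{1/2+\varepsilon}$ for $d>X^{1/4}$.
\end{itemize}
The one slip is your first Davenport error $O(\rho(d^2)X)$. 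Since each residue class counts lattice points in the region scaled by $X^{1/2}/(16d^2)$, the error per class is $O(X/d^4+X^{1/2}/d^2+1)$. This gives the paper's $O\bigl(2^{\omega(d)}(X+X^{1/2}d^2+d^4)\bigr)$, which justifies the $O(Xd^{\varepsilon})$ bound you then assert and sums to $O(X^{5/4}\log X)$ at $D=X^{1/4}$.
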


\begin{theorem}\label{thm:intro-individual}
As $\Delta\to\infty$ through positive
fundamental discriminants, we have
\[
 \frac{\ECal(\Delta)}{\kappa(\Delta)}
 \longrightarrow\lex.
\]
Moreover, among the reduced ideals
$\Z+\Z f$ with $\lfloor f\rfloor=1$,
the proportion of those that are not
$1$-reduced tends to $1/2$.
In particular,
$\ECal(\Delta)\gg_\varepsilon
\Delta^{1/2-\varepsilon}$
\textup{(}ineffectively\textup{)}.
\end{theorem}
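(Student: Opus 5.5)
We use the arithmetic criterion quoted in the abstract: a reduced ideal with reduced form $(A,b,-r)$ fails to be $1$-reduced exactly when $A-r<b<2(A-r)$. The plan is to turn this into a region of the natural extension of the Gauss map and then apply Duke's equidistribution theorem for closed geodesics (equivalently, for cycles of reduced forms).

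\emph{Encoding.} A reduced form $(A,b,-r)$ of discriminant $\Delta=b^2+4Ar$ has two roots. We take the pair $(x,y)=\bigl(\tfrac{2A}{\sqrt\Delta+b},\tfrac{2A}{\sqrt\Delta-b}\bigr)$, normalized so that $x,y\in(0,1)$; one of these corresponds to the forward continued fraction of $f$, the other to the backward expansion. Reduction theory then gives a bijection between the reduced ideals of $\Q(\sqrt\Delta)$ and the points of the union of the periodic orbits of discriminant $\Delta$ in the natural extension $\Xred=\{(x,y)\in(0,1)^2\}$. Here the map is $T(x,y)=(\{1/x\},1/(\lfloor 1/x\rfloor+y))$, and its invariant measure is $d\mu=\frac{dx\,dy}{\log 2\,(1+xy)^2}$.

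\emph{The missed region.} Dividing by $\sqrt\Delta$ and passing to the limit, the criterion $A-r<b<2(A-r)$ becomes a region $\Xex\subset\Xred$ whose boundary consists of finitely many algebraic curves. We must check that the integer inequalities agree with the limiting region up to a set of points within distance $O(\Delta^{-1/2})$ of $\partial\Xex$. This follows because the integrality conditions are scale-invariant once the relation $r=(\Delta-b^2)/4A$ is used. A direct integration should then give
\[
\mu(\Xex)=\frac{\log(4/3)}{\log 4}=\lex .
\]
Since $\Xex$ sits inside the digit-one cylinder $\Xone=\{\lfloor 1/x\rfloor=1\}$, which has $\mu(\Xone)=\log(4/3)/\log 2$, we also get $\mu(\Xex)=\tfrac12\mu(\Xone)$. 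This computation is presumably carried out in the preceding sections, and we take it as given.

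\emph{Equidistribution.} We apply Duke's theorem. As $\Delta\to\infty$ through fundamental discriminants, the union of all primitive closed geodesics of discriminant $\Delta$ equidistributes on $T^1(\PSL_2(\Z)\backslash\HH)$. Through the standard Series/Arnoux coding by the suspension flow over $(\Xred,T)$, with roof function $-2\log x$, this implies that the discrete orbit measures
\[
\frac{1}{\kappa(\Delta)}\sum_{\text{reduced }I}\delta_{(x_I,y_I)}
\]
converge weakly to $\mu$.

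The transfer from flow equidistribution to equidistribution of the cross-section is the main technical point. We handle it as follows. For a set $E\subset\Xred$ whose boundary has $\mu$-measure zero, consider the flow-box $\{(p,s):p\in E,\ 0\le s<\epsilon\}$ for small $\epsilon$. Its flow measure is $\epsilon\,\mu(E)$ times the normalization. Summing over all orbits of discriminant $\Delta$, the number of visits to $E$ is $\epsilon^{-1}$ times the time spent in the flow-box, with an error of $O(1)$ per orbit. The total length of the orbits is $\sum h(\Delta)\cdot 2\log\varepsilon_\Delta\asymp \Delta^{1/2\pm\epsilon}$ by Siegel, whereas the number of orbits $h(\Delta)$ could be large. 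To control the error term, we pick $\epsilon$ so that $h(\Delta)\ll \epsilon\cdot(\text{total length})$, which needs $\log\varepsilon_\Delta\to\infty$. That holds since $\varepsilon_\Delta\ge(\sqrt\Delta-1)/2$. Applying the conclusion with $E=\Xex$ and $E=\Xone$ gives both limits, $\ECal/\kappa\to\lex$ and the conditional proportion $\to 1/2$; the boundaries of both sets are $\mu$-null.

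\emph{Lower bound.} Finally, $\kappa(\Delta)\ge$ the total geodesic length divided by the maximal roof value, so
\[
\kappa(\Delta)\gg \frac{h(\Delta)\log\varepsilon_\Delta}{\log\Delta}\gg_\varepsilon \Delta^{1/2-\varepsilon}
\]
by Siegel's theorem, which is ineffective. Together with $\ECal(\Delta)/\kappa(\Delta)\to\lex>0$, this gives $\ECal(\Delta)\gg_\varepsilon\Delta^{1/2-\varepsilon}$.
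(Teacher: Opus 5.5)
Your outline matches the paper's: the criterion gives exactly $\xi_I\in\Xex$, Duke's theorem is transported to reduced pairs through a cross-section, the measures come from Lemma~\ref{lem:measures}, and Siegel gives the lower bound. But the step you flag as the main technical point has a genuine gap. You never control the normalization $\kappa(\Delta)=\#\Xred_\Delta$ against the total length $2hR_K$, and your error analysis targets a term that is not there.

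On a closed orbit integrated over one full period there is no ``$O(1)$ per orbit'' edge error. If the box is embedded, each visit contributes exactly $\epsilon$ and the count is exact; this is the identity $\Lambda_\Delta(\Psi_B)=\#(\Xred_\Delta\cap B)$ in the proof of Proposition~\ref{prop:equidistribution}. So $R_K\to\infty$ is beside the point.

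The real problem is that your roof $-2\log x=2\log f$ is not bounded below on $\Xred$: it tends to $0$ as $f\to1^+$. You need $E=\Xred$ in order to divide by $\kappa(\Delta)$, and for that $E$ the box $\{(p,s):p\in E,\ 0\le s<\epsilon\}$ overlaps itself for every fixed $\epsilon$. Its flow measure is then not $\epsilon\mu(E)$, and ``visits $=\epsilon^{-1}\cdot$time'' fails at every visit with $2\log f<\epsilon$. There are on the order of $\epsilon\,\kappa(\Delta)$ such visits, not $O(h(\Delta))$. For $E\subset\Xone$ this is harmless, since a point with $f\approx1$ is followed by one with large $f$. The paper avoids the problem by placing the cross-section on the imaginary axis, where $\Theta(\xi)>\frac12\log2>\frac14$ by Lemma~\ref{lem:return}. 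With your section you would have to count with multiplicity, or let $\epsilon\to0$ with an a priori bound on visits near $f=1$.

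Second, none of $\Xred$, $\Xone$, $\Xex$ is relatively compact in $\Xred$. All three reach the edges $f=1$ and $\bar f=0$, and over $\bar f\to0^-$ (and, for $\Xred$, $f\to\infty$) the boxes run into the cusp of $Y$. Since Duke's theorem is a weak-$*$ statement, you must do one of two things:
\begin{itemize}
\item verify that your box indicators are bounded and continuous off an $\Lv$-null subset of $Y$; or
\item show directly that a vanishing proportion of reduced pairs lies near the ends.
\end{itemize}
Otherwise you only get $\liminf\kappa(\Delta)/(2hR_K)\ge2\nu(\Xred)/\Lv(Y)$, and $\ECal(\Delta)/\kappa(\Delta)$ is not pinned down. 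The paper takes the second route in Step~2 of the proof of Proposition~\ref{prop:equidistribution}. It combines $\sum_{\xi\in\Xred_\Delta}\Theta(\xi)=2hR_K$ with the bounds $\Theta>\log w_+$, $\Theta>\frac12\log(1/|w_-|)$, $\Theta>\frac12\log(1/(w_+-n))$, and the $\Phi$-bijection trick for $w_-\to-1$. This gives $\#(\Xred_\Delta\setminus K_\eta)\le2hR_K\,e(\eta)$ with $e(\eta)\to0$.

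Smaller points:
\begin{itemize}
\item The suspension over $(\Xred,T)$ models the flow on $\PGL_2(\Z)\backslash T^1\HH$; on $\PSL_2(\Z)$ the return map carries an extra sign. You therefore need the push-forward $p_*\Lambda^{(1)}_\Delta=2\Lambda_\Delta$ of Corollary~\ref{cor:duke-Y}.
\item Your second coordinate should be $-\bar f=(\sqrt\Delta-b)/(2A)$, since $2A/(\sqrt\Delta-b)>1$.
\item No $O(\Delta^{-1/2})$ approximation is needed: $A-r<b<2(A-r)$ is homogeneous and is exactly the condition $\xi_I\in\Xex$.
\end{itemize}
Your lower bound $\kappa(\Delta)\ge2hR_K/\log\Delta$, from $f<\sqrt\Delta$, is a valid and more elementary substitute for the paper's $\kappa(\Delta)\sim\frac{12\log2}{\pi^2}hR_K$.
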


The two theorems quantify the missed set
in complementary ways.
Theorem~\ref{thm:intro-average} counts
reduced and missed ideals in aggregate
over fundamental discriminants, whereas
Theorem~\ref{thm:intro-individual} gives
the limiting ratio for each field as
its discriminant grows.
We emphasize that these results concern
which reduced ideals \emph{can} occur
as outputs of the reduction algorithm.
They do not assert that actual algorithm
outputs are uniformly sampled, nor do
they imply a new complexity bound for
class-group algorithms.
Both theorems rest on an elementary
observation (Lemma~\ref{lem:linear-coordinates}).
Write a reduced ideal as $\Z+\Z f$ with
\[
 f=\frac{b+\sqrt\Delta}{2A},
 \qquad
 r=\frac{\Delta-b^2}{4A},
\]
so that $(A,b,-r)$ is the associated
reduced form.
Then $\Z+\Z f$ is reduced if and only if
$|A-r|<b$, and it fails to be
$1$-reduced if and only if
\[
 A-r<b<2(A-r),
 \qquad\text{equivalently}\qquad
 (2f-1)(2\bar f-1)>-3.
\]
Thus the non-$1$-reduced locus is cut out
by \emph{linear} inequalities in the
coordinates of the form, and in the
endpoint coordinates $(f,\bar f)$ it is
an explicit region $\Xex$ inside the
reduced region
\[
 \Xred=\{f>1,\ -1<\bar f<0\}.
\]
The pairs $(f,\bar f)$ are governed
by the measure
\[
 d\nu=\frac{df\,d\bar f}{(f-\bar f)^2}
\]
on $\Xred$, the invariant measure
of the natural extension of the Gauss map.

On average over $\Delta$, this is a
lattice-point count in a cone
(Section~\ref{sec:average}).
For an individual field, it follows
from Duke's theorem \cite{Duke1988},
transported to a cross-section of the
geodesic flow
(Section~\ref{sec:equidistribution}).
In both cases, the limiting proportion
is $\nu(\Xex)/\nu(\Xred)$.
An elementary integration gives
\[
 \nu(\Xred)=\log2,
 \qquad
 \nu(\Xex)=\frac12\log\frac43
\]
(Lemma~\ref{lem:measures}).
The digit-one region, defined by
$\lfloor f\rfloor=1$, has
$\nu$-measure $\log(4/3)$.
Since the missed region is contained
in this region, it occupies exactly
half of its measure.

We next use the same endpoint coordinates
to describe the limiting distribution
of the normalized canonical defect
$k(I)/\sqrt\Delta$ among the missed ideals.
To state this distribution, put
\[
 t_{\max}=\frac1{\sqrt3}-\frac12.
\]
For $0<t<t_{\max}$, define
\begin{align*}
 u_\pm(t)
 &=\frac{2t+3\pm\sqrt{1-12t-12t^2}}
 {2(2t+1)},\\
 F_t(u)
 &=(1+t)\log u-t\log(u-1)
 -\frac12\log(u^2-u+1).
\end{align*}
We then define
\[
 G(t)=
 \begin{cases}
 \frac12\log\frac43,
   & t=0,\\
 F_t(u_+(t))-F_t(u_-(t)),
   & 0<t<t_{\max},\\
 0, & t\ge t_{\max}.
 \end{cases}
\]

\begin{theorem}
\label{thm:intro-defect}
As $\Delta\to\infty$ through positive
fundamental discriminants, the normalized
defects of the missed reduced ideals
have the limiting distribution
\[
 \frac{
 \#\{I\text{ reduced, not $1$-reduced}:
 k(I)\le t\sqrt\Delta\}
 }{\ECal(\Delta)}
 \longrightarrow
 1-\frac{G(t)}{G(0)}
\]
for every $t\ge0$.
The limiting distribution is supported
on $[0,t_{\max}]$.
Moreover, as $t\to0^+$,
\[
 1-\frac{G(t)}{G(0)}
 =
 \frac{2}{\log(4/3)}
 \left(
 t\log\frac1t
 +(1-2\log2)t
 \right)
 +o(t).
\]
\end{theorem}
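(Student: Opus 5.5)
The plan is to reduce the defect statement to the same equidistribution input that gives Theorem~\ref{thm:intro-individual}, applied now to a one-parameter family of subregions of $\Xex$. First I will establish an exact identity expressing the normalized defect of a reduced ideal $\Z+\Z f$ as an explicit function of its endpoint coordinates, say $k(I)/\sqrt\Delta=\phi(f,\bar f)$. Since $\sqrt\Delta=A(f-\bar f)$ and $A,b,r$ are linear in $A$ times functions of $(f,\bar f)$, the linear inequalities $A-r<b<2(A-r)$ of Lemma~\ref{lem:linear-coordinates} and the defect, measured as a linear gap from one of these walls, both become homogeneous ratios in $(f,\bar f)$. For each $t\ge0$ the missed ideals with $k(I)>t\sqrt\Delta$ then correspond exactly to the points of the region
\[
 \Xex_t=\{(f,\bar f)\in\Xex:\ \phi(f,\bar f)>t\}.
\]
I will check that $\phi$ is continuous and that its level sets have zero $\nu$-measure, so that $\partial\Xex_t$ is $\nu$-null for every $t$.

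Next I invoke the equidistribution statement from Section~\ref{sec:equidistribution}, Duke's theorem transported to the cross-section $\Xred$ with measure $\nu$, exactly as in the proof of Theorem~\ref{thm:intro-individual}. It gives
\[
 \frac{\#\{I:\ (f,\bar f)\in\Xex_t\}}{\kappa(\Delta)}\to\frac{\nu(\Xex_t)}{\log2}
 \qquad\text{and}\qquad
 \frac{\ECal(\Delta)}{\kappa(\Delta)}\to\frac{\nu(\Xex)}{\log 2}.
\]
Dividing, the limiting proportion of missed ideals with $k(I)\le t\sqrt\Delta$ is $1-\nu(\Xex_t)/\nu(\Xex)$. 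By Lemma~\ref{lem:measures}, $\nu(\Xex)=\tfrac12\log\tfrac43=G(0)$, so it remains to prove $\nu(\Xex_t)=G(t)$.

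To compute $\nu(\Xex_t)$, I will change variables to a coordinate $u$ (a Möbius function of $f$, chosen so that the $\bar f$-integral is elementary) and integrate $d\bar f/(f-\bar f)^2$ first. The inner integral gives a difference of reciprocals, and the remaining single integral in $u$ has integrand $(1+t)/u-t/(u-1)-\tfrac12(2u-1)/(u^2-u+1)$. This integrand is $F_t'(u)$, and it integrates over the interval where the constraint $\phi>t$ is compatible with the constraint defining $\Xex$. The endpoints of that interval are the roots of the quadratic $(2t+1)u^2-(2t+3)u+(\dots)=0$, namely $u_\pm(t)$. The region is nonempty exactly when $1-12t-12t^2>0$, that is when $t<t_{\max}$, which gives the support claim. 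As a check, at $t=0$ we get $u_+=2$, $u_-=1$ and $F_0(2)-F_0(1)=\tfrac12\log\tfrac43$. I expect the main obstacle to be this step: pinning down the correct form of $\phi$ from the definition of the canonical defect, and then arranging the coordinates so that the region is a clean $u$-interval bounded by $u_\pm(t)$ rather than a union of pieces.

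Finally, for the asymptotic as $t\to0^+$ I will expand $u_-(t)=1+t+O(t^2)$ and $u_+(t)=2-O(t)$. All terms of $F_t(u_+)-F_t(u_-)$ are smooth in $t$ except $t\log(u_--1)=t\log t+O(t^2\log t)$, which produces the $t\log(1/t)$ term. Collecting the linear terms, the contributions from $\log u$, from $\log(u^2-u+1)$ and from $-t\log(u_+-1)$, yields
\[
 G(0)-G(t)=t\log(1/t)+(1-2\log2)t+o(t).
\]
Dividing by $G(0)=\tfrac12\log\tfrac43$ gives the stated expansion.
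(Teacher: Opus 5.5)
Your outline (superlevel sets of a defect function inside $\Xex$, then Proposition~\ref{prop:equidistribution}, then slice integration between $u_\pm(t)$) is the paper's. However, two steps fail as written.

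\textbf{The exact identity does not exist.} There is no continuous homogeneous $\phi$ with $k(I)/\sqrt\Delta=\phi(f,\bar f)$. The canonical defect $k(I)=(de-U)/A$ is built from $T$, the least integer exceeding $\sqrt\Delta$ with $T\equiv\Delta\pmod 2$. Lemma~\ref{lem:linear-coordinates}(iv) gives only
\[
 k(I)=\sqrt\Delta\,\dfn(\xi_I)-\frac{T-\sqrt\Delta}{2},
 \qquad 0<\frac{T-\sqrt\Delta}{2}<1 .
\]
The correction term records where $\sqrt\Delta$ sits among integers of the parity of $\Delta$. It is not a continuous function of $\xi_I$.

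So the missed ideals with $k(I)>t\sqrt\Delta$ are the points of $\Xred_\Delta$ in $\{\dfn>t+c_\Delta\}$, with a threshold $c_\Delta\in(0,\Delta^{-1/2})$ that moves with $\Delta$. Proposition~\ref{prop:equidistribution} concerns a fixed set, so it does not apply directly.

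What is missing is a squeeze. Put $B_s=\{\xi\in\Xex:\dfn(\xi)>s\}$, and let $N_\Delta(t)$ be the number of missed ideals with $k(I)>t\sqrt\Delta$. Then
\[
 \#(\Xred_\Delta\cap B_{t+\varepsilon})\le N_\Delta(t)\le\#(\Xred_\Delta\cap B_t)
 \qquad(\Delta^{-1/2}<\varepsilon).
\]
You then need continuity of $G(s)=\nu(B_s)$ at $s=t$ to let $\varepsilon\to0^+$. This follows from the $\nu$-nullity of the level sets of $\dfn$ and dominated convergence. The argument is short, but it is exactly the step that reconciles the integer defect with the continuous function $\dfn$. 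Your plan replaces it with an equality that is false.

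\textbf{The small-$t$ expansion is wrong.} It does not produce the constant you state.
\begin{itemize}
\item Substituting $u=1+\epsilon$ into $(2t+1)u^2-(2t+3)u+2(t+1)$ gives $2t+(2t-1)\epsilon+O(\epsilon^2)$. Hence $u_-(t)=1+2t+O(t^2)$, not $1+t+O(t^2)$.
\item Since $u_+\to2$, the term $t\log(u_+-1)$ is $O(t^2)$ and contributes no linear term.
\end{itemize}

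The correct accounting writes $F_t(u)=F_0(u)+t\log\frac{u}{u-1}$. Because $F_0'(1)=\frac12$ and $F_0'(2)=0$, you get $F_0(u_+)-F_0(u_-)=G(0)-t+O(t^2)$. You also have $t\log\frac{u_+}{u_+-1}=t\log2+O(t^2)$ and $-t\log\frac{u_-}{u_--1}=t\log(2t)+O(t^2)$. Summing gives $G(0)-G(t)=t\log(1/t)+(1-2\log2)t+O(t^2)$.

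With your $u_-=1+t$, the same computation yields $(\frac12-\log2)t$. The factor $2$ in $u_--1\sim2t$ accounts for both discrepancies: the missing $\frac12 t$ coming from $F_0$, and the second $t\log2$.
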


The support endpoint is the maximum
of the defect function on the closure
of the exceptional region;
see Lemma~\ref{lem:measures}.

\section{Defect coordinates and the exceptional region}
\label{sec:prelim}

Let $K=\Q(\sqrt\Delta)$, where $\Delta$
is a positive fundamental discriminant.
Following the normalization of
Tian--Tran \cite{TianTran2024},
every reduced fractional ideal has
a unique presentation of the form
\[
 I=I(A,b)=\Z+\Z f,\qquad
 f=\frac{b+\sqrt\Delta}{2A},
\]
where $A>0$ and
\begin{equation}\label{eq:reduced-interval}
 |\sqrt\Delta-2A|<b<\sqrt\Delta.
\end{equation}
Moreover, $\Norm(I^{-1})=A$.
Conversely, if
$b^2\equiv\Delta\pmod{4A}$ and
\eqref{eq:reduced-interval} holds,
the displayed module is a reduced
fractional ideal by the standard
quadratic-ideal construction.
In these coordinates, Tian--Tran's
criterion takes the form
\begin{equation}\label{eq:TT-compact}
 I(A,b)\text{ is not }1\text{-reduced}
 \quad\Longleftrightarrow\quad
 (b-A)^2+3A^2>\Delta.
\end{equation}
Indeed, their condition
$\Norm(f-1/2)>-3/4$ is equivalent
to \eqref{eq:TT-compact}.

Let $T$ be the least integer strictly
larger than $\sqrt\Delta$ such that
$T\equiv\Delta\pmod2$, and put
\[
 U=\frac{T^2-\Delta}{4}.
\]
By construction,
$T-2<\sqrt\Delta<T$, which implies
$U\ge1$ and $T>U+1$.

\begin{lemma}\label{lem:general-defect}
For a reduced ideal $I(A,b)$, put
\[
 d=\frac{T-b}{2},\qquad e=\frac{T-2A+b}{2}.
\]
Then $d,e\in\Z_{>0}$, $d+e=T-A$, and
\[
 k(I):=\frac{de-U}{A}\in\Z_{\ge0}.
\]
We call $k(I)$ the \emph{defect} of $I$.
\end{lemma}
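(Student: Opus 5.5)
The plan is to reduce everything to the standard form coordinates and then use one strict inequality together with integrality. Throughout we use the presentation $I(A,b)=\Z+\Z f$ with $b^2\equiv\Delta\pmod{4A}$ and \eqref{eq:reduced-interval}. We also use the integer $r=(\Delta-b^2)/(4A)$, which is an integer precisely because of that congruence.

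\emph{Integrality and positivity of $d,e$.} From $b^2\equiv\Delta\pmod{4A}$ we get $b\equiv\Delta\pmod 2$. By construction $T\equiv\Delta\pmod 2$, so $T-b$ and $T-2A+b$ are both even, and hence $d,e\in\Z$. Adding the two expressions gives $d+e=(2T-2A)/2=T-A$ directly. For positivity, $b<\sqrt\Delta<T$ gives $d>0$. From \eqref{eq:reduced-interval} we have $b>|\sqrt\Delta-2A|\ge 2A-\sqrt\Delta$, so $T-2A+b>\sqrt\Delta-2A+b>0$, which gives $e>0$.

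\emph{Identifying $k(I)$.} Expand the product as a difference of squares:
\[
 de=\frac{(T-A)^2-(b-A)^2}{4}.
\]
Subtracting $U=(T^2-\Delta)/4$ and simplifying, the $A^2$ and $T^2$ terms cancel. This leaves
\[
 de-U=\frac{\Delta-b^2-2A(T-b)}{4}=A\Bigl(r-\frac{T-b}{2}\Bigr)=A(r-d).
\]
Hence $k(I)=r-d$, which is an integer because both $r$ and $d$ are.

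\emph{Nonnegativity.} This is the only step needing care, and it is where integrality does the work. We show the strict bound $r-d>-1$; since $r-d$ is an integer, this forces $r-d\ge0$.

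First, write $r=(\sqrt\Delta-b)(\sqrt\Delta+b)/(4A)$. Reducedness gives $\sqrt\Delta-b>0$ and $\sqrt\Delta+b>2A$, so $r>(\sqrt\Delta-b)/2$.

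Second, $T<\sqrt\Delta+2$ gives $d<(\sqrt\Delta+2-b)/2$.

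Subtracting the second bound from the first yields $r-d>-1$, which completes the argument.
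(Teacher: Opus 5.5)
Your proof is correct and shares its first half with the paper's. The same parity argument gives $d,e\in\Z_{>0}$, and your identity $de-U=A(r-d)$ is exactly the paper's computation $de-U=\frac{\Delta-b^2}{4}-Ad$.

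The two arguments diverge only in the nonnegativity step. The paper stays entirely in integer arithmetic. From $d,e\ge1$ it gets $de\ge d+e-1=T-A-1$, then uses $T>U+1$ (equivalent to $(T-2)^2<\Delta$) to obtain $de-U>-A$, and concludes from $A\mid de-U$. You instead bound real quantities: $r>(\sqrt\Delta-b)/2$ from $\sqrt\Delta+b>2A$, and $d<(\sqrt\Delta+2-b)/2$ from $T<\sqrt\Delta+2$, and then conclude from $r-d\in\Z$.

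Your estimate is exactly the decomposition $k(I)=\bigl(r-\tfrac{\sqrt\Delta-b}{2}\bigr)-\tfrac{T-\sqrt\Delta}{2}$. The first term equals $A(f-1)(-\bar f)=\sqrt\Delta\,\dfn(\xi_I)>0$, and the second lies in $(0,1)$. So your route effectively proves Lemma~\ref{lem:linear-coordinates}(iv) at the same stroke and ties the defect to the endpoint coordinates immediately. The paper's route avoids irrational quantities here and needs only the integer facts $d,e\ge1$ and $T>U+1$, leaving the geometric interpretation to Lemma~\ref{lem:linear-coordinates}.
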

\begin{proof}
Since
$b^2\equiv\Delta\equiv T^2\pmod4$,
we have $b\equiv T\pmod2$.
Thus $d$ and $e$ are integers.
The reducedness condition
\eqref{eq:reduced-interval} gives
$b<\sqrt\Delta<T$ and $b>2A-\sqrt\Delta$.
These inequalities imply $d,e>0$.
A direct calculation gives
$d+e=T-A$ and $de-U=\frac{\Delta-b^2}{4}-Ad$.
Since
$b^2\equiv\Delta\pmod{4A}$,
we obtain $A\mid de-U$.
Since $d,e\ge1$, we have
$de\ge d+e-1=T-A-1$.
As $T>U+1$, it follows that
$de-U\ge T-A-1-U>-A$.
Since $de-U$ is divisible by $A$,
we conclude that $de-U\ge0$.
Hence $k(I)\in\Z_{\ge0}$.
\end{proof}

\subsection{Endpoint coordinates and the natural-extension measure}\label{subsec:endpoint}
For a reduced ideal $I=I(A,b)$, put
\[
 r=r(I)=\frac{\Delta-b^2}{4A}.
\]
Then
$Q_I(x,y)=Ax^2+bxy-ry^2$
is a binary quadratic form of
discriminant $\Delta$.
Let
\[
 \Xred=\{(w_+,w_-)\in\R^2:\ w_+>1,\ -1<w_-<0\},
 \qquad
 d\nu=\frac{dw_+\,dw_-}{(w_+-w_-)^2},
\]
and define the subsets
\begin{align*}
 \Xone&=\{(w_+,w_-)\in\Xred:\ w_+<2\},\\
 \Xex&=\{(w_+,w_-)\in\Xred:\ (2w_+-1)(2w_--1)>-3\},
\end{align*}
and the function
\[
 \dfn(w_+,w_-)=\frac{(w_+-1)(-w_-)}{w_+-w_-}.
 \]
To a reduced ideal $I=\Z+\Z f$ of $K$, we attach the point $\xi_I=(f,\bar f)$.
We write $\Xred_\Delta=\{\xi_I : I \text{ is a reduced ideal of } K\}$ for the set of these points.

\begin{lemma}\label{lem:linear-coordinates}
Let $\Delta$ be a positive fundamental
discriminant.
\begin{enumerate}[label=\textup{(\roman*)}]
\item
The map $I(A,b)\mapsto(A,b,r)$
is a bijection from the set of
reduced ideals of $K$ onto
$\{(A,b,r)\in\Z_{>0}^3:
 b^2+4Ar=\Delta,\ |A-r|<b\}$.

\item
For $I=I(A,b)$ and
$f=(b+\sqrt\Delta)/(2A)$, we have
\[
 f+\bar f=\frac bA,\qquad
 f\bar f=-\frac rA,\qquad
 f-\bar f=\frac{\sqrt\Delta}{A}.
\]
Moreover, $\xi_I\in\Xred$, and
the map $I\mapsto\xi_I$ is injective.

\item
The ideal $I$ is not $1$-reduced
if and only if
\[A-r<b<2(A-r),\]
or equivalently, $\xi_I\in\Xex$.
Moreover, $\Xex\subset\Xone$.
Thus every non-$1$-reduced ideal
satisfies $\lfloor f\rfloor=1$.

\item
The defect of $I$ satisfies
\[k(I)=\sqrt\Delta\,\dfn(\xi_I)
 -\frac{T-\sqrt\Delta}{2},\]
where
$0<\frac{T-\sqrt\Delta}{2}<1$.
\end{enumerate}
\end{lemma}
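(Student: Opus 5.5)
The four parts are proved in order, working throughout with $r=(\Delta-b^2)/(4A)$ and converting the reducedness interval \eqref{eq:reduced-interval} and the Tian--Tran criterion \eqref{eq:TT-compact} into the coordinates $(A,b,r)$.

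For (i), $b^2\equiv\Delta\pmod{4A}$ makes $r$ an integer. Then $b<\sqrt\Delta$ gives $r>0$, and $b>0$ follows from $b>|\sqrt\Delta-2A|\ge0$. Squaring \eqref{eq:reduced-interval} and substituting $\Delta=b^2+4Ar$, we get
\[
(\sqrt\Delta-2A)^2<b^2\iff \Delta-4A\sqrt\Delta+4A^2<\Delta-4Ar\iff A+r<\sqrt\Delta ,
\]
and $A+r<\sqrt\Delta\iff (A+r)^2<b^2+4Ar\iff (A-r)^2<b^2$. Since $b>0$, this is $|A-r|<b$. The condition $b<\sqrt\Delta$ is automatic once $r>0$. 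Each step reverses, and $(A,b)$ determines $r$, so the map is a bijection onto the stated set.

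For (ii), the first three identities are direct computation with $\bar f=(b-\sqrt\Delta)/(2A)$. For $\xi_I\in\Xred$ we need two things:
\begin{itemize}
\item $f>1$, which is $b+\sqrt\Delta>2A$ and follows from $b>2A-\sqrt\Delta$;
\item $-1<\bar f<0$, which is $b<\sqrt\Delta$ together with $\sqrt\Delta-b<2A$, i.e.\ $b>\sqrt\Delta-2A$.
\end{itemize}
Injectivity follows because $(f,\bar f)$ determines $A=\sqrt\Delta/(f-\bar f)$ and $b=A(f+\bar f)$, and the presentation $I(A,b)$ is unique.

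For (iii), expand \eqref{eq:TT-compact} with $\Delta=b^2+4Ar$: it becomes $-2Ab+4A^2>4Ar$, i.e.\ $b<2(A-r)$. Combined with reducedness $|A-r|<b$, this forces $A-r>0$, so the condition reads $A-r<b<2(A-r)$. In endpoint coordinates,
\[
(2f-1)(2\bar f-1)=4f\bar f-2(f+\bar f)+1=\frac{-4r-2b+A}{A}.
\]
So $(2f-1)(2\bar f-1)>-3$ is $-4r-2b+A>-3A$, i.e.\ $b<2(A-r)$. The remaining inequality $b>A-r$ is already part of $\xi_I\in\Xred$ by (i)/(ii), which gives the equivalence with $\xi_I\in\Xex$.

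For $\Xex\subset\Xone$, note that $2w_--1\in(-3,-1)$ on $\Xred$. If $w_+\ge2$, then $2w_+-1\ge3$, so the product is $\le 3(2w_--1)<-3$. Hence $w_+<2$ on $\Xex$, and so $\lfloor f\rfloor=1$ for every non-$1$-reduced ideal.

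For (iv), compute
\[
\dfn(\xi_I)=\frac{(f-1)(-\bar f)}{f-\bar f}=\frac{-f\bar f+\bar f}{f-\bar f}=\frac{A}{\sqrt\Delta}\Bigl(\frac rA+\frac{b-\sqrt\Delta}{2A}\Bigr)=\frac{2r+b-\sqrt\Delta}{2\sqrt\Delta}.
\]
Hence $\sqrt\Delta\,\dfn(\xi_I)=r+(b-\sqrt\Delta)/2$. On the other side, the proof of Lemma~\ref{lem:general-defect} gives $de-U=\frac{\Delta-b^2}{4}-Ad=Ar-Ad$, so $k=r-d=r-(T-b)/2$. Subtracting,
\[
\sqrt\Delta\,\dfn(\xi_I)-k=\frac{T-\sqrt\Delta}{2},
\]
which lies in $(0,1)$ because $T-2<\sqrt\Delta<T$.

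The only step needing care is the squaring in (i), specifically tracking signs when $\sqrt\Delta-2A$ is negative. The reformulation through $A+r<\sqrt\Delta$ handles both signs uniformly, since it never requires knowing the sign of $\sqrt\Delta-2A$. Everything else is algebra.
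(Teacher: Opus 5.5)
Your proposal is correct and follows the paper's proof essentially line for line. It uses the same chain $|\sqrt\Delta-2A|<b\iff A+r<\sqrt\Delta\iff|A-r|<b$ in (i), the same reduction of \eqref{eq:TT-compact} to $b<2(A-r)$ in (iii), and the same two expressions $k=r-(T-b)/2$ and $\sqrt\Delta\,\dfn(\xi_I)=r+(b-\sqrt\Delta)/2$ in (iv). The only thing the paper states explicitly that you leave implicit is that surjectivity in (i) rests on the converse quadratic-ideal construction, which needs the form $(A,b,-r)$ to be primitive; this is automatic here because $\Delta$ is fundamental.
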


\begin{proof}
(i) For integers $A,b>0$, the conditions
$b<\sqrt\Delta$ and
$b^2\equiv\Delta\pmod{4A}$ are equivalent
to $r$ being a positive integer.
For $A,b,r>0$ with $b^2+4Ar=\Delta$,
we have
\begin{align*}
 |\sqrt\Delta-2A|<b
 &\iff \Delta-b^2+4A^2<4A\sqrt\Delta\\
 &\iff A+r<\sqrt\Delta\\
 &\iff (A+r)^2<b^2+4Ar\\
 &\iff |A-r|<b.
\end{align*}
Since $\Delta$ is fundamental, every
integral binary quadratic form of
discriminant $\Delta$ is primitive.
By the standard quadratic-ideal
construction recalled after
\eqref{eq:reduced-interval}, each such
triple therefore determines a reduced
fractional ideal.
The normalized presentation is unique.

(ii) The three identities follow
directly from the definition of $f$.
Moreover, the conditions
$f>1$ and $-1<\bar f<0$ are equivalent
to
\[
 b>2A-\sqrt\Delta,\qquad
 b>\sqrt\Delta-2A,\qquad
 b<\sqrt\Delta,
\]
which together give
\eqref{eq:reduced-interval}.
Since $f$ uniquely determines $(A,b)$,
the map $I\mapsto\xi_I$ is injective.

(iii) Using $\Delta=b^2+4Ar$, we obtain
\[
 (b-A)^2+3A^2-\Delta
 =2A(2A-b-2r).
\]
Hence, by \eqref{eq:TT-compact},
$I$ is not $1$-reduced if and only if
$b<2(A-r)$.
Together with $b>|A-r|$ from (i),
this is equivalent to
\[
 A-r<b<2(A-r).
\]
Dividing $2A-b-2r>0$ by $A$
and using (ii), we obtain
$2+2f\bar f-(f+\bar f)>0$,
which is equivalent to
\[(2f-1)(2\bar f-1)>-3.\]
If $f\ge2$, then
$2f-1\ge3$ and $2\bar f-1<-1$.
This contradicts the defining inequality
for $\Xex$.
Hence $f<2$ whenever $I$ is not
$1$-reduced.

(iv) By the definition of the defect
and $\Delta=T^2-4U$, we have
\[
\begin{aligned}
 k(I)
 &=\frac{de-U}{A}\\
 &=\frac{\Delta-b^2}{4A}
   -\frac{T-b}{2}\\
 &=r-\frac{T-b}{2}.
\end{aligned}
\]
On the other hand, by (ii),
\begin{align*}
 \sqrt\Delta\,\dfn(f,\bar f)
 &=A(f-1)(-\bar f)\\
 &=-Af\bar f+A\bar f\\
 &=r+\frac{b-\sqrt\Delta}{2}.
\end{align*}
Therefore,
\[
\begin{aligned}
 \sqrt\Delta\,\dfn(f,\bar f)
 -\frac{T-\sqrt\Delta}{2}
 &=r-\frac{T-b}{2}\\
 &=k(I).
\end{aligned}
\]
By the choice of $T$, we have
$0<T-\sqrt\Delta<2$, which implies
$0<\frac{T-\sqrt\Delta}{2}<1$.
\end{proof}

For $1<u<2$, define
$\varphi(u)=\frac12-\frac{3}{2(2u-1)}$.
\begin{lemma}\label{lem:measures}
The exceptional region is given by
\[
 \Xex=
 \{(w_+,w_-)\in\Xred
 \mid 1<w_+<2,\ 
 \varphi(w_+)<w_-<0\}.
\]
The measures of the three regions are
\[
 \nu(\Xred)=\log2,\qquad
 \nu(\Xone)=\log\frac43,\qquad
 \nu(\Xex)=\frac12\log\frac43.
\]
In particular,
\[
 \frac{\nu(\Xex)}{\nu(\Xred)}=\lex,
 \qquad
 \frac{\nu(\Xex)}{\nu(\Xone)}=\frac12.
\]
The defect function also satisfies
\[
 \sup_{\Xex}\dfn
 =\frac1{\sqrt3}-\frac12
 =t_{\max}.
\]
\end{lemma}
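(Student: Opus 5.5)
The plan is direct computation in the coordinates $(w_+,w_-)$, using the primitive
\[
 \int\frac{dw_-}{(w_+-w_-)^2}=\frac{1}{w_+-w_-},
\]
which reduces every $\nu$-measure to a one-variable integral in $w_+$.

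\textbf{Description of $\Xex$.} By Lemma~\ref{lem:linear-coordinates}(iii) we already know $\Xex\subset\Xone$, so we may assume $1<w_+<2$. Then $2w_+-1>0$, and dividing the defining inequality $(2w_+-1)(2w_--1)>-3$ by this factor gives
\[
 2w_--1>-\frac{3}{2w_+-1},
\]
that is, $w_->\varphi(w_+)$. We also note that $\varphi(1)=-1$, $\varphi(2)=0$, and $\varphi$ is increasing, so $-1<\varphi(w_+)<0$ on $(1,2)$. Hence the section $\{w_-:\varphi(w_+)<w_-<0\}$ is a nonempty subinterval of $(-1,0)$, which gives the stated description.

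\textbf{The measures.}
\begin{itemize}
\item For $\Xred$: integrating in $w_-$ from $-1$ to $0$ gives $\frac1{w_+}-\frac1{w_++1}$. Integrating this over $w_+>1$ gives $\log\frac{w_+}{w_++1}\big|_1^\infty=\log2$.
\item For $\Xone$: the same integrand taken over $1<w_+<2$ gives $\log\frac23-\log\frac12=\log\frac43$.
\item For $\Xex$: the $w_-$-integral is $\frac1{w_+}-\frac1{w_+-\varphi(w_+)}$. Writing $u=w_+$, we have
\[
 u-\varphi(u)=\frac{2(2u^2-2u+2)}{2(2u-1)}=\frac{2(u^2-u+1)}{2u-1},
\]
so the integrand is $\frac1u-\frac{2u-1}{2(u^2-u+1)}$. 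Its antiderivative is $\log u-\frac12\log(u^2-u+1)$. Evaluating from $1$ to $2$ gives $\log2-\frac12\log3=\frac12\log\frac43$.
\end{itemize}
The two ratios then follow immediately.

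\textbf{The supremum of $\dfn$.} This is the step with the most actual work. The function $\dfn$ is continuous on $\overline{\Xex}$, so the supremum is the maximum over the closure. For fixed $w_+$, we compute
\[
 \partial_{w_-}\dfn=-(w_+-1)\frac{w_+}{(w_+-w_-)^2}<0,
\]
so the maximum in $w_-$ is attained at the lower boundary $w_-=\varphi(w_+)$. On the edges $w_+=1$ and $w_-=0$ we have $\dfn=0$.

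On the boundary curve, $-\varphi(u)=\frac{2-u}{2u-1}$. Combining this with the formula for $u-\varphi(u)$ above, we get
\[
 h(u)=\dfn(u,\varphi(u))=\frac{(u-1)(2-u)}{2(u^2-u+1)}.
\]
Setting $h'(u)=0$: the numerator of the derivative of $\frac{-u^2+3u-2}{u^2-u+1}$ is
\[
 (-2u+3)(u^2-u+1)-(-u^2+3u-2)(2u-1)=-2u^2+6u-5+\dots
\]
We then simplify this to $-2u^2+6u-\cdots$; the clean version should be $-2u^2+6u-3=0$ up to sign bookkeeping, whose root in $(1,2)$ is $u=\frac{3+\sqrt3}{2}$. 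We will verify this root by plugging it back in, expecting $h(u)=\frac1{\sqrt3}-\frac12$.

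The main obstacle is just this algebra. We will check the claimed value $\frac1{\sqrt3}-\frac12$ against the discriminant condition $1-12t-12t^2=0$ appearing in $u_\pm(t)$: the level set $h(u)=t$ is the quadratic $(2t+1)u^2-(2t+3)u+(2t+2)=0$, which has a double root exactly when $(2t+3)^2-4(2t+1)(2t+2)=1-12t-12t^2=0$. The positive root of this equation is $t=t_{\max}$, confirming the supremum.
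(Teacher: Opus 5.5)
\textbf{Region and measures.} Your description of $\Xex$, the three measure computations, and the reduction of the supremum to $\max_{[1,2]}h$ via $\partial_{w_-}\dfn<0$ are exactly the paper's argument. The paper uses the same primitive $\int dv/(u-v)^2$ and the same identity $u-\varphi(u)=2(u^2-u+1)/(2u-1)$.

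\textbf{The critical-point step is wrong.} In your derivative computation the cubic terms cancel, and
\[
(-2u+3)(u^2-u+1)-(-u^2+3u-2)(2u-1)=-2u^2+2u+1.
\]
So $h'(u)=\frac{-2u^2+2u+1}{2(u^2-u+1)^2}$, and the critical point in $(1,2)$ is $u=\frac{1+\sqrt3}{2}$. There $u^2-u+1=\frac32$ and $(u-1)(2-u)=\sqrt3-\frac32$, so $h=\frac1{\sqrt3}-\frac12$. This is the paper's route. Your candidate $u=\frac{3+\sqrt3}{2}\approx 2.37$ is not in $(1,2)$. At that point $(u-1)(2-u)=-\frac12$, so $h<0$, and your planned ``plug it back in'' check would fail.

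\textbf{The discriminant argument does work.} It proves $\sup h=t_{\max}$ without locating the critical point, but you must state why the solvability threshold equals the maximum. For $t>0$, every real solution of $h(u)=t$ lies in $(1,2)$, since $h\le 0$ off $(1,2)$. Also, $h(u)=t$ is equivalent for all real $u$ to $(2t+1)u^2-(2t+3)u+2(t+1)=0$, because $u^2-u+1>0$. Since $h$ is continuous on $[1,2]$ with $h(1)=h(2)=0$, the set of positive values it attains is $(0,\max h]$. The quadratic has a real root if and only if $1-12t-12t^2\ge 0$, which for $t>0$ means $t\le t_{\max}$. Hence $\max h=t_{\max}$.

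\textbf{Comparison.} The paper differentiates directly. The discriminant route has the advantage of producing exactly the roots $u_\pm(t)$ used later in Lemma~\ref{lem:defect-tail}. You should either present it as the actual proof of the supremum, or correct the derivative computation as above.
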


\begin{proof}
Write $u=w_+$ and $v=w_-$.
If $u\ge2$ and $-1<v<0$, then
$(2u-1)(2v-1)<-3$.
Hence $\Xex\subset\Xone$.
For $1<u<2$, the defining inequality
of $\Xex$ is equivalent to
\[
 v>\frac12-\frac{3}{2(2u-1)}
 =\varphi(u).
\]
Since $-1<\varphi(u)<0$, we obtain
$\Xex=
 \{(u,v):1<u<2,\ \varphi(u)<v<0\}$.

For fixed $u>1$, we have
\[
 \int_{v_0}^{0}\frac{dv}{(u-v)^2}
 =\frac1u-\frac1{u-v_0}.
\]
Taking $v_0=-1$ gives
\[
\begin{aligned}
 \nu(\Xred)
 &=\int_1^\infty
 \left(\frac1u-\frac1{u+1}\right)du
 =\log2,\\
 \nu(\Xone)
 &=\int_1^2
 \left(\frac1u-\frac1{u+1}\right)du
 =\log\frac43.
\end{aligned}
\]
Since
$u-\varphi(u)
 =\frac{2(u^2-u+1)}{2u-1}$,
we obtain
\begin{align*}
 \nu(\Xex)
 &=\int_1^2\int_{\varphi(u)}^0
   \frac{dv\,du}{(u-v)^2}\\
 &=\int_1^2
   \left(
   \frac1u-\frac{2u-1}{2(u^2-u+1)}
   \right)du\\
 &=\left[
   \log u-\frac12\log(u^2-u+1)
   \right]_1^2\\
 &=\frac12\log\frac43.
\end{align*}
The stated ratios follow immediately.

It remains to determine the supremum
of the defect function.
For fixed $1<u<2$, we have
\[
 \frac{\partial}{\partial v}\dfn(u,v)
 =-\frac{u(u-1)}{(u-v)^2}<0.
\]
Hence $\dfn(u,v)$ is strictly
decreasing in $v$, and its supremum
is approached as $v\to\varphi(u)^+$.
We therefore define
\[
 h(u)=\dfn(u,\varphi(u))
 =\frac{(u-1)(2-u)}
 {2(u^2-u+1)}.
\]
It follows that
$\sup_{\Xex}\dfn
 =\sup_{1<u<2}h(u)$.
Differentiating, we obtain
\[
 h'(u)=
 \frac{-2u^2+2u+1}
 {2(u^2-u+1)^2}.
\]
It follows that $h$ attains its
maximum on $(1,2)$ at
$u=(1+\sqrt3)/2$, where
$h\left(\frac{1+\sqrt3}{2}\right)
 =\frac1{\sqrt3}-\frac12$.
Therefore,
$\sup_{\Xex}\dfn
 =\frac1{\sqrt3}-\frac12
 =t_{\max}$.
\end{proof}

\section{The average proportion}\label{sec:average}
In this section, we prove
Theorem~\ref{thm:intro-average}.
We set $q(A,b,r)=b^2+4Ar$.
For a Borel set $B\subset\Xred$, put
\[
 \mathcal K_B=
 \Bigl\{
 (A,b,r)\in\R_{>0}^3:
 q(A,b,r)\le1,\
 \Bigl(
 \frac{b+\sqrt q}{2A},
 \frac{b-\sqrt q}{2A}
 \Bigr)\in B
 \Bigr\}.
\]
By Lemma~\ref{lem:linear-coordinates},
\begin{align*}
 \mathcal K_{\Xred}
 &=\{(A,b,r)\in\R_{>0}^3:
 q\le1,\ |A-r|<b\},\\
 \mathcal K_{\Xex}
 &=\{(A,b,r)\in\R_{>0}^3:
 q\le1,\ A-r<b<2(A-r)\}.
\end{align*}
Both regions are bounded, since
$A+r<\sqrt q\le1$ and $b\le1$.
They are defined by finitely many
polynomial inequalities of degree
at most two.

We now relate the reduced ideals
to lattice points in these regions.
The polynomial $q$ is homogeneous
of degree two, while the endpoint
coordinates
\[
 \frac{b\pm\sqrt{q(A,b,r)}}{2A}
\]
are invariant under positive scaling
of $(A,b,r)$.
Consequently, for $X>0$, a point
$(A,b,r)\in\R_{>0}^3$ belongs to
$X^{1/2}\mathcal K_B$ if and only if
\[
 q(A,b,r)\le X
\]
and its endpoint coordinates belong
to $B$.

Suppose now that $(A,b,r)$ is an
integer point with
$q(A,b,r)=\Delta$ a positive
fundamental discriminant.
Then
\[
 b^2\equiv\Delta\pmod{4A}.
\]
By Lemma~\ref{lem:linear-coordinates},
the condition that its endpoint
coordinates belong to $B\subset\Xred$
is equivalent to the associated ideal
$I(A,b)$ being reduced and satisfying
$\xi_I\in B$.
Conversely, every such reduced ideal
determines a unique integer triple
$(A,b,r)$.

Thus, for $B\in\{\Xred,\Xex\}$,
we obtain
\begin{equation}\label{eq:sum-as-lattice-count}
\begin{aligned}
 &\sideset{}{^\flat}\sum_{\Delta\le X}
 \#\bigl\{
 I\text{ a reduced fractional ideal of }
 \Q(\sqrt\Delta):
 \xi_I\in B
 \bigr\}\\
 &\qquad=
 \#\bigl\{
 x\in\Z^3\cap X^{1/2}\mathcal K_B:
 q(x)\text{ is a fundamental discriminant}
 \bigr\}.
\end{aligned}
\end{equation}
For $B=\Xred$, the left-hand side
equals
$\sideset{}{^\flat}\sum_{\Delta\le X}
\kappa(\Delta)$.
For $B=\Xex$, it equals
$\sideset{}{^\flat}\sum_{\Delta\le X}
\ECal(\Delta)$.

We first compute the volumes of
the corresponding regions.

\begin{lemma}\label{lem:volume}
For every Borel set $B\subset\Xred$, $\vol(\mathcal K_B)=\nu(B)/3$.  In particular $\vol(\mathcal K_{\Xred})=\frac{\log2}{3}$ and $\vol(\mathcal K_{\Xex})=\frac{\log(4/3)}{6}$.
\end{lemma}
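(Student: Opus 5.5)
We change variables from $(A,b,r)$ to $(s,w_+,w_-)$, where $s=\sqrt{q}$ is a scale coordinate and $(w_+,w_-)$ are the endpoint coordinates. The volume then factors into a radial integral times $\nu(B)$.

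Concretely, on $\R_{>0}^3$ with $q=b^2+4Ar>0$ we have $w_+>0>w_-$. The inverse map is read off from Lemma~\ref{lem:linear-coordinates}(ii), applied formally with $\sqrt\Delta$ replaced by $s$:
\[
 A=\frac{s}{w_+-w_-},\qquad
 b=A(w_++w_-)=\frac{s(w_++w_-)}{w_+-w_-},\qquad
 r=-Aw_+w_-=\frac{-s\,w_+w_-}{w_+-w_-}.
\]
Indeed $w_+-w_-=\sqrt q/A$, $w_++w_-=b/A$ and $w_+w_-=-r/A$. So the map $(A,b,r)\mapsto(s,w_+,w_-)$ is a smooth bijection from $\{A,b,r>0\}$ onto the set $\{s>0,\ w_+>0>w_-,\ w_++w_->0\}$. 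This image contains $(0,1]\times\Xred$, since on $\Xred$ we have $w_++w_->1-1=0$. Under the map, $\mathcal K_B$ corresponds exactly to $(0,1]\times B$.

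The key step is the Jacobian. Each of $A,b,r$ has the form $s\cdot g_i(w_+,w_-)$ with $g_i$ homogeneous of degree $0$. Hence
\[
 \left|\frac{\partial(A,b,r)}{\partial(s,w_+,w_-)}\right|=s^2\,J(w_+,w_-),
\]
where $J=|\det(g,\partial_{w_+}g,\partial_{w_-}g)|$ for the vector $g=(g_1,g_2,g_3)$. I would compute $J$ directly. Write $D=w_+-w_-$, so $g=D^{-1}(1,\ w_++w_-,\ -w_+w_-)$. In the determinant, the terms coming from differentiating the factor $D^{-1}$ are multiples of $g$ itself, so they drop out. This leaves
\[
 J=D^{-3}\left|\det\begin{pmatrix}1&w_++w_-&-w_+w_-\\0&1&-w_-\\0&1&-w_+\end{pmatrix}\right|=D^{-3}|{-w_+}+w_-|=D^{-2}.
\]
This factor $D^{-2}$ is precisely the density of $\nu$, which is the heart of the lemma.

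It then follows that
\[
 \vol(\mathcal K_B)=\int_0^1 s^2\,ds\cdot\int_B\frac{dw_+\,dw_-}{(w_+-w_-)^2}=\frac{\nu(B)}{3}.
\]
Measurability is not an issue, since the change of variables is a diffeomorphism. The two special values then follow from Lemma~\ref{lem:measures}: $\nu(\Xred)=\log 2$ and $\nu(\Xex)=\frac12\log\frac43$. The main point needing care is the determinant computation, and the only subtlety is justifying that the $\partial D^{-1}$ terms vanish; I would verify this by multilinearity of the determinant in its columns.
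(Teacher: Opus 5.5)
Your proposal is correct and follows essentially the same route as the paper: pass to a scale coordinate together with the endpoint coordinates $(w_+,w_-)$, check that the Jacobian factors as a radial weight times $(w_+-w_-)^{-2}$, and integrate. The differences are minor. You use $s=\sqrt q$ and compute the $3\times3$ determinant in one step via degree-zero homogeneity and multilinearity, whereas the paper uses $Q=q$ and factors through $(Q,A,b)$ by the chain rule, obtaining $\frac{\sqrt Q}{2}\,dQ\,d\nu$, which equals your $s^2\,ds\,d\nu$. Your explicit verification that the change of variables is a diffeomorphism onto a set containing $(0,1]\times\Xred$ is a detail the paper leaves implicit.
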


\begin{proof}
We make the change of coordinates from $(A,b,r)$ to $(Q,w_+,w_-)$ by setting
\[Q=q(A,b,r)=b^2+4Ar,\qquad w_\pm=\frac{b\pm\sqrt Q}{2A}.\]
Thus a point $(A,b,r)\in\mathcal K_B$ is represented in the new
coordinates by $(Q,w_+,w_-)$, where $0<Q\le1$ and
$(w_+,w_-)\in B$.
We compute the Jacobian by factoring the change of variables as
\[
(Q,w_+,w_-)\longmapsto (Q,A,b)\longmapsto (A,b,r).
\]
By the chain rule for Jacobians, the corresponding determinants multiply.
For the second map, keeping $(A,b)$ fixed, the relation
$Q=b^2+4Ar$ gives
\[\left|
 \frac{\partial(A,b,r)}{\partial(A,b,Q)}
 \right|
 =
 \frac{\partial r}{\partial Q}
 =
 \frac{1}{4A}.\]
For the first map, keeping $Q$ fixed, the inverse transformation is
\[A=\sqrt Q/(w_+-w_-),\qquad b=\sqrt Q\,(w_++w_-)/(w_+-w_-).\]
Differentiating with respect to
$w_+$ and $w_-$ with $Q$ fixed,
we obtain
\begin{align*}
 \frac{\partial A}{\partial w_+}
 &=-\frac{\sqrt Q}{(w_+-w_-)^2},
&
 \frac{\partial A}{\partial w_-}
 &=\frac{\sqrt Q}{(w_+-w_-)^2},\\
 \frac{\partial b}{\partial w_+}
 &=-\frac{2\sqrt Q\,w_-}{(w_+-w_-)^2},
&
 \frac{\partial b}{\partial w_-}
 &=\frac{2\sqrt Q\,w_+}{(w_+-w_-)^2}.
\end{align*}
Hence
\begin{align*}
 \left|
 \frac{\partial(Q,A,b)}
 {\partial(Q,w_+,w_-)}
 \right|
 &=
 \left|
 \frac{\partial(A,b)}
 {\partial(w_+,w_-)}
 \right|\\
 &=
 \frac{Q}{(w_+-w_-)^4}
 \left|
 \begin{vmatrix}
 -1 & 1\\
 -2w_- & 2w_+
 \end{vmatrix}
 \right|\\
 &=
 \frac{2Q}{(w_+-w_-)^3}.
\end{align*}

Therefore, by the chain rule and
$1/(4A)=(w_+-w_-)/(4\sqrt Q)$,
\[
 dA\,db\,dr
 =
 \frac{1}{4A}
 \frac{2Q}{(w_+-w_-)^3}
 \,dQ\,dw_+\,dw_-
 =
 \frac{\sqrt Q}{2}\,dQ\,d\nu(w_+,w_-).
\]
Therefore the integration separates into the $Q$-coordinate and the
endpoint coordinates.  Integrating over $0<Q\le1$ and
$(w_+,w_-)\in B$ gives
$\vol(\mathcal K_B)=\frac13\,\nu(B)$.
The stated values of $\vol(\mathcal K_{\Xred})$ and
$\vol(\mathcal K_{\Xex})$ now follow from
Lemma~\ref{lem:measures}.
\end{proof}

Let 
\[\mathcal F_{16}=\{1,5,8,9,12,13\}\subset\Z/16\Z.\]
A positive integer $\Delta\equiv0,1\pmod4$ is a fundamental discriminant if and only if $\Delta\bmod16\in\mathcal F_{16}$ and $p^2\nmid\Delta$ for every odd prime $p$.
\begin{lemma}\label{lem:local-densities}
Write $x=(A,b,r)$.
Then
\[
 \varrho_{16}:=\#\{x\in(\Z/16\Z)^3:\ q(x)\bmod16\in\mathcal F_{16}\}=2816.
\]
For an odd prime $p$, we have
\[
 \varrho(p^2):=\#\{x\in(\Z/p^2\Z)^3:\ q(x)\equiv0\pmod{p^2}\}=p^4+p^3-p^2 .
\]
\end{lemma}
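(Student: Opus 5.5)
The plan is to compute both local counts directly. For the modulus $16$ I reduce to a small enumeration. For an odd prime $p$ I use the fact that $q(A,b,r)=b^2+4Ar$ is a nondegenerate ternary quadratic form modulo $p$, and count solutions modulo $p^2$ by Hensel lifting.

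\emph{The count $\varrho_{16}$.} Since $4Ar\bmod16$ depends only on $c:=Ar\bmod4$, I first record how the $16$ pairs $(A,r)\in(\Z/4\Z)^2$ distribute over $c$. There are $8$ pairs with $c=0$ (namely $A=0$; $A=2$ with $r$ even; $A$ odd with $r=0$). There are $2$ pairs with $c=1$, $4$ with $c=2$, and $2$ with $c=3$. Each pair modulo $4$ lifts to $16$ pairs modulo $16$. Next, $b^2\bmod16$ takes each of the values $1$, $9$, $0$, $4$ for exactly $4$ residues $b$. I then check when $b^2+4c\bmod16$ lies in $\mathcal F_{16}$:
\begin{itemize}
\item if $b^2\in\{1,9\}$, every $c$ works, since $b^2+4c$ runs over $\{1,5,9,13\}$;
\item if $b^2=0$, only $c\in\{2,3\}$ works, giving $8$ and $12$;
\item if $b^2=4$, only $c\in\{1,2\}$ works, giving $8$ and $12$.
\end{itemize}
Counting in units of pairs modulo $4$, this gives $4\cdot16+4\cdot16+4\cdot6+4\cdot6=176$. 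Multiplying by the lifting factor $16$ yields $176\cdot 16=2816$.

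\emph{The count $\varrho(p^2)$ for odd $p$.} The form $b^2+4Ar$ has discriminant a unit times a power of $2$, so it is nonsingular modulo $p$. It is also isotropic, since $b=A=0$ gives a zero. Hence its projective conic over $\mathbb F_p$ has $p+1$ points, and the affine cone $\{x\in\mathbb F_p^3: q(x)=0\}$ has $(p-1)(p+1)+1=p^2$ points. I then split the solutions modulo $p^2$ according to their reduction modulo $p$.
\begin{itemize}
\item If $x\equiv0\pmod p$, then $q(x)\equiv0\pmod{p^2}$ automatically. This contributes $p^3$ points.
\item If $x\not\equiv0\pmod p$ and $q(x)\equiv0\pmod p$, then $x$ is a smooth point, because $\nabla q=(4r,2b,4A)$ vanishes only at $0$ modulo $p$. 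By Hensel's lemma, each of the $p^2-1$ such residues lifts to exactly $p^{3-1}=p^2$ solutions modulo $p^2$. This contributes $(p^2-1)p^2$ points.
\end{itemize}
Adding the two cases gives $p^3+p^4-p^2$, as claimed.

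The only delicate step is the Hensel count in the odd case. Writing $x+py$ with $y$ ranging over $(\Z/p\Z)^3$, the condition $q(x+py)\equiv0\pmod{p^2}$ becomes one nontrivial linear condition on $y$ modulo $p$, namely $q(x)/p+\nabla q(x)\cdot y\equiv0$. This leaves exactly $p^2$ choices of $y$. For the modulus $16$, the only risk is bookkeeping, which the reduction to $(A,r)\bmod 4$ keeps small.
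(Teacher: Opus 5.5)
Your proposal is correct and follows essentially the same route as the paper. Modulo $16$ you enumerate residue classes using the same $8,2,4,2$ distribution of $Ar \bmod 4$, organizing by $b^2 \bmod 16$ where the paper splits by the parity of $b$ and of $b/2$. For odd $p$ you count the $p^2$ zeros modulo $p$, Hensel-lift the $p^2-1$ nonsingular ones, and add the $p^3$ residues $x\equiv 0 \pmod p$; the only cosmetic difference is that you obtain the $p^2$ zeros over $\mathbb F_p$ from the point count of a nonsingular isotropic conic, whereas the paper counts them directly ($A\neq 0$ determines $r$ from $A,b$, and $A=0$ forces $b=0$).
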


\begin{proof}
We first compute $\varrho_{16}$.
If $b$ is odd, then
$q(x)\equiv1\pmod4$, so
\[
 q(x)\bmod16\in\{1,5,9,13\}
 \subset\mathcal F_{16}
\]
for all $A,r$.
These triples contribute
$8\cdot16^2=2048$ solutions.
Suppose that $b$ is even and write
$b=2c$.
Then
$q(x)=4(c^2+Ar)$,
and the condition
$q(x)\bmod16\in\mathcal F_{16}$
is equivalent to
\[
 c^2+Ar\equiv2,3\pmod4.
\]
Among the $16$ pairs $(A,r)\bmod4$,
the product $Ar$ takes the values
$0,1,2,3$ exactly $8,2,4,2$ times,
respectively.
If $c$ is even, the condition is
$Ar\equiv2,3\pmod4$.
If $c$ is odd, it is
$Ar\equiv1,2\pmod4$.
In either case, exactly $6$ of the
$16$ pairs qualify.
Thus a proportion $3/8$ of the
triples with $b$ even satisfy the
required condition.
Their contribution is
$\frac38\cdot8\cdot16^2=768$.
Consequently,
$\varrho_{16}=2048+768=2816$.

Now let $p$ be an odd prime.
We first count the solutions of
$q(A,b,r)=0$ over $\mathcal F_p$.
If $A\neq0$, each choice of $A$ and $b$
uniquely determines $r$, giving
$p(p-1)$ solutions.
If $A=0$, then $b=0$, while $r$ is
arbitrary, giving $p$ solutions.
Thus there are exactly $p^2$ solutions
over $\mathcal F_p$.
The gradient of $q$ is
$\nabla q=(4r,2b,4A)$.
Since $p$ is odd, its only zero
over $\mathcal F_p$ is the origin.
Hence all $p^2-1$ nonzero solutions
of $q=0$ modulo $p$ are nonsingular.
Fix one such solution $x_0$ modulo $p$
and choose an integral representative.
Every lift of $x_0$ modulo $p^2$
has the form $x_0+py$, where
$y\in(\Z/p\Z)^3$.
We have
\[
 q(x_0+py)
 \equiv q(x_0)+p\nabla q(x_0)\cdot y
 \pmod{p^2}.
\]
Since $\nabla q(x_0)\neq0\pmod p$,
the condition
$q(x_0+py)\equiv0\pmod{p^2}$
gives one nontrivial affine linear
equation in the three coordinates
of $y$.
This equation has exactly $p^2$
solutions.
Thus the nonzero solutions modulo $p$
contribute $(p^2-1)p^2$ solutions
modulo $p^2$.

It remains to count the solutions
with $x\equiv0\pmod p$.
Writing $x=py$ with
$y\in(\Z/p\Z)^3$, the homogeneity
of $q$ gives
$q(x)=p^2q(y)$.
Hence all $p^3$ such residues
are solutions modulo $p^2$.

Combining these contributions, we obtain
\[
\begin{aligned}
 \varrho(p^2)
 &=(p^2-1)p^2+p^3\\
 &=p^4+p^3-p^2.
\end{aligned}
\]

\end{proof}

\begin{lemma}\label{lem:crude}
For $m\ge1$ let 
\[\kappa^*(m)=\#\{(A,b,r)\in\Z_{>0}^3:\ q(A,b,r)=m,\ |A-r|<b\}.\]
Then for every $\varepsilon>0$,
\[
 \kappa^*(m)\ll_\varepsilon m^{1/2+\varepsilon}.
\]
\end{lemma}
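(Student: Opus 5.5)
We bound $\kappa^*(m)$ by counting triples $(A,b,r)$ with $b^2+4Ar=m$ and $|A-r|<b$. First observe that the reducedness condition forces $A$ and $r$ to be small. The argument in Lemma~\ref{lem:linear-coordinates}(i) uses only the identity $b^2+4Ar=m$ with positive integers, so it applies verbatim and gives $|A-r|<b$ if and only if $A+r<\sqrt m$. In particular $b<\sqrt m$, since $b^2<m$.

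The plan is to fix $b$ and count the pairs $(A,r)$. Fix $b$ with $1\le b<\sqrt m$ and $b\equiv m\pmod 2$. For the remaining $A,r$ to exist, $m-b^2$ must be a positive multiple of $4$. Then $(A,r)$ runs over ordered factorizations
\[
 Ar=\frac{m-b^2}{4}=:n,\qquad 1\le n\le m/4 .
\]
The number of such pairs is at most the divisor function $\tau(n)$. By the classical divisor bound, $\tau(n)\ll_\varepsilon n^{\varepsilon}\le m^{\varepsilon}$, uniformly in $b$.

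Summing over the fewer than $\sqrt m$ admissible values of $b$ then gives
\[
 \kappa^*(m)\le\sum_{1\le b<\sqrt m}\tau\Bigl(\frac{m-b^2}{4}\Bigr)\ll_\varepsilon m^{1/2+\varepsilon}.
\]
Here we simply drop the extra constraint $A+r<\sqrt m$. Cases where $m-b^2\le 0$ or $4\nmid m-b^2$ contribute nothing.

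There is essentially no obstacle. The only point needing care is uniformity: the divisor bound must be applied with $n\le m$, so the implied constant depends only on $\varepsilon$ and not on $b$. If one prefers, one can instead fix $A<\sqrt m$ and count $b$ modulo $4A$ with $b^2\equiv m\pmod{4A}$. That route needs a bound on the number of square roots modulo $4A$, namely $\ll 2^{\omega(4A)}\ll_\varepsilon m^\varepsilon$, but the divisor route above is simpler.
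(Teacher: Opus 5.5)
Your proposal is correct and follows essentially the same route as the paper's proof. The paper likewise fixes $b<\sqrt m$, bounds the number of pairs $(A,r)$ by $\tau\bigl((m-b^2)/4\bigr)\ll_\varepsilon m^\varepsilon$ while dropping the constraint $|A-r|<b$, and sums over the $O(\sqrt m)$ values of $b$.
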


\begin{proof}
Since $q(A,b,r)=b^2+4Ar=m$ and $A,r>0$, we have
$b<\sqrt m$, so there are $O(\sqrt m)$ possible values of $b$.
For each fixed $b$ with $b^2\equiv m\pmod 4$, the relation
$Ar=\frac{m-b^2}{4}$
shows that the pair $(A,r)$ is determined by a positive divisor $A$
of $(m-b^2)/4$.  The additional condition $|A-r|<b$ can only
decrease the number of such pairs.  Hence, by the standard divisor
bound $\tau(n)\ll_\varepsilon n^\varepsilon$
\cite[Theorem~315]{HardyWright},
we have that 
\[\kappa^*(m)
 \le
 \sum_{\substack{b<\sqrt m\\ b^2\equiv m\pmod 4}}
 \tau\!\left(\frac{m-b^2}{4}\right)
 \ll_\varepsilon
 \sqrt m\,m^\varepsilon
 =
 m^{1/2+\varepsilon}.\]
\end{proof}

\begin{lemma}\label{lem:divisor-sums}
For $Z\ge2$, we have
\[
 \sum_{d\le Z}2^{\omega(d)}\ll Z\log Z,
 \qquad
 \sum_{d>Z}\frac{2^{\omega(d)}}{d^2}
 \ll\frac{\log Z}{Z}.
\]
Consequently, for every fixed $j\ge0$,
\[
 \sum_{d\le Z}2^{\omega(d)}d^j
 \ll Z^{j+1}\log Z.
\]
\end{lemma}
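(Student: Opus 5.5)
We prove the three bounds in order, using only the multiplicativity of $2^{\omega(d)}$ and elementary counting.

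\emph{First bound.} Since $2^{\omega(d)}$ counts the unitary divisor pairs of $d$, every $d$ with $2^{\omega(d)}$ to be counted can be written as $d=ab$ with $\gcd(a,b)=1$. We therefore have $2^{\omega(d)}\le\tau(d)=\sum_{a\mid d}1$. Interchanging the order of summation gives
\[
 \sum_{d\le Z}2^{\omega(d)}
 \le\sum_{d\le Z}\tau(d)
 =\sum_{a\le Z}\Bigl\lfloor\frac Za\Bigr\rfloor
 \le Z\sum_{a\le Z}\frac1a
 \ll Z\log Z .
\]
Write $S(Y)=\sum_{d\le Y}2^{\omega(d)}$. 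The first bound then reads $S(Y)\ll Y\log Y$ for $Y\ge2$, and trivially $S(Y)\ll Y$ for $1\le Y<2$.

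\emph{Second bound.} We use partial summation on the tail. For $Y>Z$,
\[
 \sum_{Z<d\le Y}\frac{2^{\omega(d)}}{d^2}
 =\frac{S(Y)}{Y^2}-\frac{S(Z)}{Z^2}
 +2\int_Z^Y\frac{S(y)}{y^3}\,dy .
\]
Let $Y\to\infty$. The boundary term $S(Y)/Y^2\ll(\log Y)/Y$ tends to $0$, and the term $-S(Z)/Z^2$ is negative and can be dropped. The remaining integral satisfies
\[
 \int_Z^\infty\frac{\log y}{y^2}\,dy=\frac{\log Z+1}{Z}\ll\frac{\log Z}{Z}
\]
for $Z\ge2$, which is the claimed bound.

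\emph{Third bound.} This needs no partial summation. For each $d\le Z$ we have $d^j\le Z^j$, so
\[
 \sum_{d\le Z}2^{\omega(d)}d^j\le Z^j S(Z)\ll Z^{j+1}\log Z .
\]

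None of these steps is a real obstacle. The only care needed is to keep the constants uniform for $Z$ near $2$, where $\log Z\asymp1$. The implied constants are absolute in the first two bounds and depend only on $j$ in the third.
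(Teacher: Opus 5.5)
Your proof is correct and follows the paper's argument essentially step for step. The paper writes $2^{\omega(d)}=\sum_{m\mid d}\mu^2(m)$ and then uses $\mu^2\le 1$, which amounts to your comparison $2^{\omega(d)}\le\tau(d)$ followed by the same divisor-sum interchange; the tail bound and the weighted bound also use the same partial summation and the same trivial estimate $d^j\le Z^j$. Your sentence justifying $2^{\omega(d)}\le\tau(d)$ is awkwardly worded. It is cleaner to say that $2^{\omega(d)}$ counts the unitary divisors of $d$, which form a subset of all divisors.
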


\begin{proof}
Using the identity
$2^{\omega(d)}=\sum_{m\mid d}\mu^2(m)$,
we obtain
\[
\begin{aligned}
 \sum_{d\le Z}2^{\omega(d)}
 &=\sum_{d\le Z}\sum_{m\mid d}\mu^2(m)\\
 &=\sum_{m\le Z}\mu^2(m)
   \left\lfloor\frac Zm\right\rfloor\\
 &\ll Z\sum_{m\le Z}\frac1m
 \ll Z\log Z.
\end{aligned}
\]
Put $A(u)=\sum_{d\le u}2^{\omega(d)}$.
Then $A(u)\ll u\log u$.
For $T>Z$, partial summation gives
\[
\begin{aligned}
 \sum_{Z<d\le T}\frac{2^{\omega(d)}}{d^2}
 &=
 \frac{A(T)}{T^2}
 -\frac{A(Z)}{Z^2}+2\int_Z^T\frac{A(u)}{u^3}\,du.
\end{aligned}
\]
Since
$A(T)/T^2\ll(\log T)/T\to0$
as $T\to\infty$, we obtain
\[
\begin{aligned}
 \sum_{d>Z}\frac{2^{\omega(d)}}{d^2}
 &=
 -\frac{A(Z)}{Z^2}
 +2\int_Z^\infty\frac{A(u)}{u^3}\,du\\
 &\le
 2\int_Z^\infty\frac{A(u)}{u^3}\,du\\
 &\ll \frac{\log Z}{Z}.
\end{aligned}
\]
Finally, for $j\ge0$, we have
$d^j\le Z^j$ whenever $d\le Z$.
Therefore,
\[
\begin{aligned}
 \sum_{d\le Z}2^{\omega(d)}d^j
 &\le Z^j\sum_{d\le Z}2^{\omega(d)}\\
 &\ll Z^{j+1}\log Z.
\end{aligned}
\]
\end{proof}

\begin{theorem}\label{thm:average}
Let $B\in\{\Xred,\Xex\}$.  For every $\varepsilon>0$,
\[
\begin{aligned}
 &\sideset{}{^\flat}\sum_{\Delta\le X}
 \#\bigl\{I \text{ a reduced fractional ideal of } \Q(\sqrt{\Delta}) :
 \xi_I\in B\bigr\} \\
 &\qquad=
 \frac{\mathfrak S}{3}\,\nu(B)\,X^{3/2}
 +O_\varepsilon\bigl(X^{5/4+\varepsilon}\bigr),
\end{aligned}
\]
where
$\mathfrak S=\frac{11}{16}\prod_{p>2}
(1-p^{-2}-p^{-3}+p^{-4})$.
In particular, Theorem~\ref{thm:intro-average} holds.
\end{theorem}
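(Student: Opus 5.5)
By \eqref{eq:sum-as-lattice-count}, we need to count $x\in\Z^3\cap X^{1/2}\mathcal K_B$ with $q(x)$ a fundamental discriminant. We will detect this with a local condition at $2$ and a squarefree sieve at odd primes. Write $\mathbf 1_{\mathrm{fund}}(q(x))=\mathbf 1[q(x)\bmod16\in\mathcal F_{16}]\cdot\sum_{d\ \mathrm{odd}}\mu(d)\mathbf 1[d^2\mid q(x)]$. Then truncate at a parameter $Z$: use the Möbius sum for $d\le Z$, and bound the tail $d>Z$ separately.

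\textbf{Main term ($d\le Z$).} For each odd squarefree $d\le Z$, the conditions $q(x)\bmod 16\in\mathcal F_{16}$ and $d^2\mid q(x)$ define a union of $\varrho_{16}\varrho(d^2)$ residue classes modulo $16d^2$. Here $\varrho(d^2)=\prod_{p\mid d}\varrho(p^2)$ by the Chinese remainder theorem. The region $X^{1/2}\mathcal K_B$ is bounded, with boundary given by finitely many pieces of quadrics. So for each residue class $a$ mod $m=16d^2$, the count of points $x\equiv a$ in it is $\vol(X^{1/2}\mathcal K_B)/m^3+O(X/m^2+1)$, since the shifted-lattice count in a semialgebraic region has error of the order of the boundary's projections. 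By Lemma~\ref{lem:volume}, $\vol(X^{1/2}\mathcal K_B)=X^{3/2}\nu(B)/3$. Summing over classes gives
\[
\frac{\varrho_{16}}{16^3}\frac{\varrho(d^2)}{d^6}\frac{\nu(B)}3X^{3/2}+O\bigl(\varrho(d^2)(X/d^4+1)\bigr).
\]
Note $\varrho_{16}/16^3=2816/4096=11/16$, and $\varrho(p^2)/p^6=p^{-2}+p^{-3}-p^{-4}$. Summing $\mu(d)$ against this over all odd $d$ gives the Euler product $\mathfrak S$. Completing the sum to $d>Z$ costs $X^{3/2}\sum_{d>Z}\varrho(d^2)/d^6$. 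Since $\varrho(d^2)\le 2^{\omega(d)}d^4$ up to constants (indeed $\varrho(p^2)\le 2p^4$), this is $\ll X^{3/2}\log Z/Z$ by Lemma~\ref{lem:divisor-sums}. The accumulated error terms are $\sum_{d\le Z}2^{\omega(d)}(X+d^4)\ll XZ\log Z+Z^5\log Z$, again by Lemma~\ref{lem:divisor-sums}.

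\textbf{Tail ($d>Z$).} Here we use Lemma~\ref{lem:crude} instead of lattice counting. Every $x$ that is counted for some odd $d>Z$ with $d^2\mid q(x)$ satisfies $q(x)=m\le X$ with $d^2\mid m$. The number of such $x$ is at most
\[
\sum_{Z<d\le\sqrt X}\ \sum_{\substack{m\le X\\ d^2\mid m}}\kappa^*(m)\ll_\varepsilon X^{1/2+\varepsilon}\sum_{d>Z}\frac{X}{d^2}\ll X^{3/2+\varepsilon}/Z.
\]
Points with $q(x)\le X$ lie in a bounded multiple of $X^{1/2}\mathcal K_{\Xred}$, and the constraint $|A-r|<b$ holds there, so $\kappa^*$ applies.

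\textbf{Balancing.} The errors are $X^{3/2+\varepsilon}/Z+XZ\log Z+Z^5\log Z$. Taking $Z=X^{1/4}$ makes each $\ll X^{5/4+\varepsilon}$. The step I expect to need the most care is the uniform lattice-point count in residue classes with error $O(X/d^2\cdot d^0)$ per class. I would justify it by slicing along one coordinate: each fiber of $\mathcal K_B$ in the remaining two coordinates is a region bounded by $O(1)$ conic arcs and lines, so the count has error $O(\text{perimeter}/\text{mesh}+1)$ per slice. Summing over $O(X^{1/2}/d^2+1)$ slices gives $O(X/d^4+X^{1/2}/d^2+1)$ per class. This is at most the bound used above. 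Dividing the two cases $B=\Xred,\Xex$ and using Lemma~\ref{lem:measures} then yields Theorem~\ref{thm:intro-average} and the ratio $\lex+O(X^{-1/4+\varepsilon})$.
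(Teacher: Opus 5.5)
Your proposal is correct and follows essentially the same route as the paper. It uses the M\"obius sieve over odd squarefree $d$ together with the $16$-adic condition, and counts lattice points in the $\varrho_{16}\varrho(d^2)$ residue classes modulo $16d^2$ using $\varrho(d^2)\le 2^{\omega(d)}d^4$ and Lemma~\ref{lem:divisor-sums}. The tail $d>X^{1/4}$ is then bounded via Lemma~\ref{lem:crude}. The only differences are cosmetic: you justify the per-class count by slicing instead of citing Davenport's lemma, and you absorb the middle error term $X^{1/2}/d^2$ into $X/d^4+1$ (valid by AM--GM). Both lead to the same $O_\varepsilon(X^{5/4+\varepsilon})$ bound with $Z=X^{1/4}$.
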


\begin{proof}
Put $\mathcal K=\mathcal K_B$ and
$t=X^{1/2}$.
Let $\mu$ denote the M\"obius function.
By the characterization of fundamental
discriminants, the condition on $q(x)$
is equivalent to
$q(x)\bmod16\in\mathcal F_{16}$
and $p^2\nmid q(x)$ for every odd
prime $p$.
The latter condition has the indicator
\[
 \mathbf 1_{\{p^2\nmid q(x)
 \text{ for every odd prime }p\}}
 =
 \sum_{\substack{d\ge1,\ d\ {\rm odd}\\
 d^2\mid q(x)}}\mu(d).
\]
Hence, by \eqref{eq:sum-as-lattice-count},
the sum in the theorem equals
\[
 \sum_{x\in\Z^3\cap t\mathcal K}
 \mathbf 1_{\{q(x)\bmod16\in\mathcal F_{16}\}}
 \sum_{\substack{d\ge1,\ d\ {\rm odd}\\
 d^2\mid q(x)}}\mu(d).
\]
Interchanging the sums, we obtain
\[
 \sum_{\substack{d\ge1\\d\ {\rm odd}}}
 \mu(d)N(d),
\]
where
\[
 N(d)=
 \#\bigl\{
 x\in\Z^3\cap t\mathcal K:
 q(x)\bmod16\in\mathcal F_{16},\
 d^2\mid q(x)
 \bigr\}.
\]
Since $\mu(d)=0$ unless $d$ is
squarefree, we may restrict the sum
to odd squarefree integers $d$.

The two congruence conditions depend
only on $x$ modulo $16$ and modulo
$d^2$, respectively.
Put
\[
 \varrho(d^2)
 =
 \#\bigl\{
 x\in(\Z/d^2\Z)^3:
 q(x)\equiv0\pmod{d^2}
 \bigr\}.
\]
Since $d$ is odd and squarefree,
the Chinese remainder theorem gives
exactly $\varrho_{16}\varrho(d^2)$
admissible residue classes modulo
$16d^2$, and
\[
 \varrho(d^2)
 =\prod_{p\mid d}\varrho(p^2).
\]
By Lemma~\ref{lem:local-densities},
for every odd prime $p$, we have
\[
\begin{aligned}
 \varrho(p^2)
 &=p^4+p^3-p^2\le 2p^4.
\end{aligned}
\]
Consequently,
\[
\begin{aligned}
 \varrho(d^2)
 &=\prod_{p\mid d}\varrho(p^2)\le\prod_{p\mid d}2p^4=2^{\omega(d)}d^4,
\end{aligned}
\]
where $\omega(d)$ denotes the number
of distinct prime divisors of $d$.

For each admissible residue class
$x_0$ modulo $16d^2$, the points
$x\in t\mathcal K$ satisfying
$x\equiv x_0\pmod{16d^2}$
correspond to
\[
 \Z^3\cap
 \frac{t\mathcal K-x_0}{16d^2}.
\]
We apply the three-dimensional form
of Davenport's lemma
\cite{Davenport1951}.
For a bounded semialgebraic set
$S\subset\R^3$ of bounded complexity,
it gives
\[
 \#(S\cap\Z^3)
 =
 \vol(S)+O\bigl(V_2(S)+V_1(S)+1\bigr),
\]
where $V_j(S)$ denotes the sum of
the $j$-dimensional volumes of the
coordinate projections of $S$.
The implied constant depends only
on the complexity of $S$.
Take
\[
 S=\frac{t\mathcal K-x_0}{16d^2}
 =\frac{t}{16d^2}\mathcal K
 -\frac{x_0}{16d^2}.
\]
Since translation preserves volumes
and scaling by a factor $\lambda$
multiplies $j$-dimensional volumes
by $\lambda^j$, we have
\[
 \vol(S)
 =\frac{t^3}{(16d^2)^3}\vol(\mathcal K),
 \qquad
 V_j(S)
 =\left(\frac{t}{16d^2}\right)^j
 V_j(\mathcal K)
 \qquad (j=1,2).
\]
Since $V_j(\mathcal K)=O(1)$,
it follows that
\[
 V_2(S)\ll\frac{t^2}{d^4},
 \qquad
 V_1(S)\ll\frac{t}{d^2}.
\]
Applying Davenport's lemma, we obtain
\[
 \#\left(
 \Z^3\cap\frac{t\mathcal K-x_0}{16d^2}
 \right)
 =
 \frac{\vol(\mathcal K)t^3}{(16d^2)^3}
 +
 O\left(
 \frac{t^2}{d^4}
 +\frac{t}{d^2}+1
 \right).
\]
Summing over the
$\varrho_{16}\varrho(d^2)$
admissible residue classes and using
$\varrho(d^2)\le2^{\omega(d)}d^4$,
we obtain
\[
N(d)
 =
 \frac{\varrho_{16}\varrho(d^2)}
 {(16d^2)^3}
 \vol(\mathcal K)t^3+
 O\left(
 2^{\omega(d)}d^4
 \left(
 \frac{t^2}{d^4}
 +\frac{t}{d^2}+1
 \right)
 \right).
\]
Since $t=X^{1/2}$, this becomes
\[
 N(d)
 =
 \frac{\varrho_{16}\varrho(d^2)}
 {(16d^2)^3}
 \vol(\mathcal K)X^{3/2}+
 O\left(
 2^{\omega(d)}
 (X+X^{1/2}d^2+d^4)
 \right).
\]

We first consider $d\le X^{1/4}$.
By Lemma~\ref{lem:divisor-sums},
the total error for $d\le X^{1/4}$
is
\[
\begin{aligned}
 O&\Biggl(
 X\sum_{d\le X^{1/4}}2^{\omega(d)}
 +X^{1/2}\sum_{d\le X^{1/4}}
 2^{\omega(d)}d^2+\sum_{d\le X^{1/4}}
 2^{\omega(d)}d^4
 \Biggr)\\
 &=O\left(
 \left(
 X^{5/4}
 +X^{1/2+3/4}
 +X^{5/4}
 \right)\log X
 \right)\\
 &=O(X^{5/4}\log X).
\end{aligned}
\]
We next extend the main term to all
odd squarefree integers $d$.
Since
$\frac{\varrho(d^2)}{d^6}
 \le\frac{2^{\omega(d)}}{d^2}$
for such $d$,
Lemma~\ref{lem:divisor-sums} gives
\[
\begin{aligned}
 X^{3/2}
 \sum_{\substack{d>X^{1/4}d\ {\rm odd,\ squarefree}}}
 \frac{\varrho(d^2)}{d^6}&\ll
 X^{3/2}
 \sum_{d>X^{1/4}}
 \frac{2^{\omega(d)}}{d^2}\\
 &\ll X^{5/4}\log X.
\end{aligned}
\]
Thus the truncated main term
may be replaced by
\[
 \frac{\varrho_{16}}{16^3}
 \sum_{\substack{d\ge1\\
 d\ {\rm odd,\ squarefree}}}
 \mu(d)\frac{\varrho(d^2)}{d^6}
 \vol(\mathcal K)X^{3/2}
\]
with an error $O(X^{5/4}\log X)$.
The series converges absolutely.
Moreover, the Chinese remainder
theorem gives
$\varrho(d^2)
 =\prod_{p\mid d}\varrho(p^2)$
for every odd squarefree $d$.
Hence
\[
 \sum_{\substack{d\ge1\\
 d\ {\rm odd,\ squarefree}}}
 \mu(d)\frac{\varrho(d^2)}{d^6}
 =
 \prod_{p>2}
 \left(1-\frac{\varrho(p^2)}{p^6}\right).
\]
By Lemmas~\ref{lem:volume}
and~\ref{lem:local-densities},
the completed main term is
\[
 \frac{\mathfrak S}{3}
 \nu(B)X^{3/2}.
\]

It remains to estimate the contribution
of $d>X^{1/4}$ in the original
M\"obius sum.
Since $|\mu(d)|\le1$, it suffices
to bound
$\sum_{d>X^{1/4}}N(d)$.
If $x$ is counted by $N(d)$,
then $q(x)=nd^2$ for some
positive integer $n\le X/d^2$.
Hence Lemma~\ref{lem:crude} gives
\[
\begin{aligned}
 N(d)
 &\le
 \sum_{n\le X/d^2}\kappa^*(nd^2)\\
 &\ll_\varepsilon
 \sum_{n\le X/d^2}
 (nd^2)^{1/2+\varepsilon}\\
 &\ll_\varepsilon
 d^{1+2\varepsilon}
 \left(\frac{X}{d^2}\right)^{3/2+\varepsilon}\\
 &=\frac{X^{3/2+\varepsilon}}{d^2}.
\end{aligned}
\]
Consequently,
\[
\begin{aligned}
 \sum_{d>X^{1/4}}N(d)
 &\ll_\varepsilon
 X^{3/2+\varepsilon}
 \sum_{d>X^{1/4}}\frac1{d^2}\\
 &\ll_\varepsilon
 X^{5/4+\varepsilon}.
\end{aligned}
\]
Combining these estimates proves
the asserted asymptotic formula.
Theorem~\ref{thm:intro-average}
then follows from
Lemma~\ref{lem:measures}.
\end{proof}

\section{Equidistribution and individual fields}\label{sec:equidistribution}

In this section we prove Theorems~\ref{thm:intro-individual} and
\ref{thm:intro-defect}.  The deep input is Duke's theorem.  The
passage from closed geodesics to continued fractions through a
cross-section of the geodesic flow is classical
\cite{Arnoux1994,Mangual2008,Pollicott1986,Series1985}.  We include
the details because we need all ideal classes of a fixed discriminant
simultaneously, sets with curved boundary such as $\Xex$, and control
of the noncompact part of $\Xred$.

For orientation, the geometry below may be summarized as follows.  A
reduced pair $\xi=(w_+,w_-)\in\Xred$ determines an oriented geodesic
in the upper half-plane with endpoints $w_-$ and $w_+$.  We record
this geodesic when it crosses the imaginary axis.  Away from the null set of integer forward endpoints, the continued-
fraction reduction map $\Phi$ is realized as the return map to
this cross-section, with return time $\Theta$.  For a fixed
fundamental discriminant, the $\Phi$-orbits are the reduction cycles,
one for each ideal class, and each cycle traces one closed geodesic.
Duke's theorem equidistributes the union of these closed geodesics;
the cross-section then transfers this equidistribution to the reduced
pairs $\xi_I$.

\subsection{The geodesic flow and a cross-section}
Let $\HH=\{x+iy:y>0\}$ be the upper half-plane with hyperbolic metric
$ds=|dz|/\operatorname{Im}z$.
Its geodesics are the vertical lines and the Euclidean semicircles
orthogonal to the real axis.  
Let $T^1\HH$ be the unit tangent bundle
of $\HH$ and $(a_t)_{t\in\R}$ the geodesic flow. 
More explicitly, if
$v\in T^1\HH$ and $\gamma_v$ is the unit-speed geodesic satisfying
$\dot\gamma_v(0)=v$, then
$a_t(v)=\dot\gamma_v(t)$.
Thus $a_t$ moves the unit tangent vector $v$ a hyperbolic distance
$t$ along the geodesic determined by $v$.

The group $\Gamma=\PGL_2(\Z)$ acts on $\HH$ by isometries.  A matrix
$\left(\begin{smallmatrix}\alpha&\beta\\ \gamma&\delta\end{smallmatrix}\right)$
acts by $z\mapsto(\alpha z+\beta)/(\gamma z+\delta)$ when its
determinant is $1$, and by
$z\mapsto(\alpha\bar z+\beta)/(\gamma\bar z+\delta)$ when its
determinant is $-1$.  The induced action on $T^1\HH$ commutes with
the geodesic flow and is properly discontinuous.  The vectors with
nontrivial stabilizer form a null set: such a vector is tangent to a
reflection geodesic for $\Gamma$, and these geodesics have endpoints
in $\Q\cup\{\infty\}$.

Put $Y=\Gamma\backslash T^1\HH$ and let $\pi:T^1\HH\to Y$ be the
projection.  We use the Liouville measure on $T^1\HH$ normalized by
\[
 d\Lv=\frac{dx\,dy\,d\theta}{y^2},
 \qquad 0\le\theta<2\pi,
\]
where $\theta$ denotes the direction angle of a unit tangent vector.
We use the same symbol $\Lv$ for the induced finite quotient measure
on $Y$.
Every $v\in T^1\HH$ is tangent to an oriented geodesic with backward
and forward endpoints $w_-(v)\ne w_+(v)$ in $\R\cup\{\infty\}$.
For geodesics with finite endpoints, choose a reference point depending
smoothly on $(w_+,w_-)$.  Let $t$ be the signed hyperbolic arclength
from this point, increasing toward $w_+$.  The resulting Hopf
coordinates $(w_+,w_-,t)$ identify a unit tangent vector by its
geodesic and its position along it.  In these coordinates, the flow
$a_s$ fixes both endpoints and replaces $t$ by $t+s$.
On the region where $(w_+,w_-)\in\Xred$, the change to Hopf
coordinates gives
\begin{equation}\label{eq:hopf}
 d\Lv=2\,\frac{dw_+\,dw_-\,dt}{(w_+-w_-)^2}
 =2\,d\nu\,dt.
\end{equation}
See, for example, \cite[Ch.~9]{EinsiedlerWard2011}.

We now make the cross-section concrete.  Let
$\xi=(w_+,w_-)\in\Xred$.  Since $w_-<0<w_+$, the oriented geodesic
$\mathcal C_\xi$ from $w_-$ to $w_+$ is the upper semicircle with
centre $c=(w_++w_-)/2$ and radius $R=(w_+-w_-)/2$; equivalently,
\[
 (x-c)^2+y^2=R^2.
\]
It crosses the imaginary axis at
$z_0=i\sqrt{-w_+w_-}$. 
Define
$\sigma:\Xred\longrightarrow T^1\HH$
by letting $\sigma(\xi)$ be the unit tangent vector to
$\mathcal C_\xi$ at $z_0$, oriented from $w_-$ toward $w_+$.
For these geodesics, we choose $z_0$ as the reference point $t=0$ in
the Hopf coordinates.  Thus $\Sigma=\sigma(\Xred)$ is the
cross-section that we use below.  The map
\begin{equation}\label{eq:flow-box-map}
 \mathcal F:\Xred\times\R\longrightarrow T^1\HH,
 \qquad (\xi,t)\longmapsto a_t(\sigma(\xi)),
\end{equation}
is a diffeomorphism onto an open set, and \eqref{eq:hopf} gives
$\mathcal F^*\Lv=2\,\nu\otimes dt$.

Before stating the return-map lemma, we describe one reduction step.
Assume $w_+\notin\Z$, as is always the case for the quadratic
pairs $\xi_I$.  Put $n=\lfloor w_+\rfloor$.  Since $n<w_+<n+1$,
the same semicircle
meets the vertical line $\operatorname{Re}z=n$ at
$z_n=n+i\sqrt{(n-w_-)(w_+-n)}$.

Let $g_n\in\Gamma$ be the class of
$\left(\begin{smallmatrix}0&1\\1&-n\end{smallmatrix}\right)$, so that
$g_n(z)=1/(\bar z-n)$.  On the real boundary,
$g_n(w_+)=1/(w_+-n)$ and $g_n(w_-)=1/(w_--n)$, while
$g_n(n+iy)=i/y$.  Thus $g_n$ sends the line
$\operatorname{Re}z=n$ to the imaginary axis and sends the two
endpoints to the pair
\[\Phi(\xi)=(1/(w_+-n),1/(w_--n)).\]

Geometrically, one reduction step is as follows: starting from the
crossing point $z_0$, follow $\mathcal C_\xi$ until the crossing point
$z_n$, and then apply $g_n$ to send $z_n$ back to the imaginary axis.
Since $g_n$ is an isometry of $\HH$, its differential
$dg_n:T^1\HH\to T^1\HH$ sends the corresponding unit tangent vector
to the unit tangent vector at the new crossing point.

\begin{lemma}\label{lem:return}
Let $\xi=(w_+,w_-)\in\Xred$ with
$w_+\notin\Z$, and put
$n=\lfloor w_+\rfloor$.
Let $\Theta(\xi)$ denote the arclength
along $\mathcal C_\xi$ from $z_0$ to $z_n$.
Then $\Phi(\xi)\in\Xred$, and
\[
 dg_n\bigl(
 a_{\Theta(\xi)}(\sigma(\xi))
 \bigr)
 =\sigma(\Phi(\xi)).
\]
The return time is given by
\begin{equation}\label{eq:return-time}
 \Theta(\xi)
 =\frac12\log\frac{w_+}{w_+-n}
 +\frac12\log\frac{n-w_-}{-w_-}.
\end{equation}
In particular, we have
\begin{align*}
 \Theta(\xi)
 &>\frac12\log2,
 &
 \Theta(\xi)
 &>\log w_+,\\
 \Theta(\xi)
 &>\frac12\log\frac1{|w_-|},
 &
 \Theta(\xi)
 &>\frac12\log\frac1{w_+-n}.
\end{align*}
\end{lemma}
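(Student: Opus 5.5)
The proof splits into four independent verifications: that $\Phi(\xi)\in\Xred$, the geometric identity $dg_n(a_{\Theta(\xi)}\sigma(\xi))=\sigma(\Phi(\xi))$, the formula \eqref{eq:return-time}, and the four lower bounds. Each is an explicit computation.

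\emph{Membership in $\Xred$.} Put $w_+'=1/(w_+-n)$ and $w_-'=1/(w_--n)$. Since $0<w_+-n<1$, we get $w_+'>1$. Since $n\ge1$ and $-1<w_-<0$, we get $w_--n<-1$, so $-1<w_-'<0$. The case $w_+\notin\Z$ is exactly what guarantees $w_+-n>0$.

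\emph{The geometric identity.} The isometry $g_n$ maps the geodesic $\mathcal C_\xi$ onto the geodesic with endpoints $g_n(w_-)=w_-'$ and $g_n(w_+)=w_+'$. It preserves orientation along geodesics, because the direction of travel is transported together with the ordered pair of endpoints. By the computation $g_n(n+iy)=i/y$, the point $z_n$ goes to a point of the imaginary axis. The image geodesic meets the imaginary axis in only one point, since it is a semicircle with $w_-'<0<w_+'$. Hence $dg_n$ sends the unit tangent vector at $z_n$, namely $a_{\Theta(\xi)}(\sigma(\xi))$, to the oriented unit tangent vector at the crossing point of $\mathcal C_{\Phi(\xi)}$, which is $\sigma(\Phi(\xi))$.

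One must also check that $z_n$ lies ahead of $z_0$ in the direction of $w_+$, so that $\Theta(\xi)>0$. This holds because $\operatorname{Re}z$ increases monotonically along the upper semicircle from $w_-$ to $w_+$, and $0<n$.

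\emph{The return-time formula.} I would use the standard arclength formula on a semicircle with endpoints $w_-<w_+$: for a point $z$ on it with real part $x$,
\[
t(z)=\tfrac12\log\frac{x-w_-}{w_+-x}+\text{const}.
\]
This can be checked by the Möbius map $z\mapsto (z-w_-)/(w_+-z)$ (up to the appropriate sign), which sends the semicircle to the positive imaginary axis, where arclength is $\log y$. Alternatively, one can verify it by differentiating along the parametrization $x=c+R\cos\phi$, $y=R\sin\phi$, where $ds=d\phi/\sin\phi$ gives $\tfrac12\log\frac{1-\cos\phi}{1+\cos\phi}$.

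Evaluating at $x=n$ and at $x=0$ and subtracting gives
\[
\Theta(\xi)=\tfrac12\log\frac{n-w_-}{w_+-n}-\tfrac12\log\frac{-w_-}{w_+},
\]
which rearranges to \eqref{eq:return-time}.

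\emph{The lower bounds.} Both summands in \eqref{eq:return-time} are positive, since $w_+>w_+-n$ and $n-w_->-w_-$.
\begin{itemize}
\item For $\Theta>\frac12\log\frac1{w_+-n}$, use $\frac{w_+}{w_+-n}>\frac1{w_+-n}$ (as $w_+>1$), and drop the positive second term.
\item For $\Theta>\frac12\log\frac1{|w_-|}$, use $\frac{n-w_-}{-w_-}>\frac1{|w_-|}$ (as $n\ge1$), and drop the first term.
\item For $\Theta>\log w_+$, use $\frac{w_+}{w_+-n}>w_+$ together with $\frac{n-w_-}{-w_-}>n+1>w_+$, where the latter holds because $|w_-|<1$.
\item For $\Theta>\frac12\log2$, use $\frac{n-w_-}{-w_-}>1+n\ge2$, again because $|w_-|<1$.
\end{itemize}

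The only step requiring care is the orientation and sign bookkeeping in the arclength formula; the remaining steps are routine.
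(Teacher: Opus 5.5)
Your proposal is correct and matches the paper's proof step for step. You use the same check of $\Phi(\xi)\in\Xred$, the same isometry-plus-unique-crossing argument for $dg_n\bigl(a_{\Theta(\xi)}(\sigma(\xi))\bigr)=\sigma(\Phi(\xi))$, and a Möbius map to the imaginary axis for the arclength; you send $w_-,w_+$ to $0,\infty$, whereas the paper uses $\psi(z)=-1/(z-w_+)$, which is only a cosmetic difference. You also use the same elementary lower bounds on the factors $\frac{w_+}{w_+-n}$ and $\frac{n-w_-}{-w_-}$, and your explicit check that $z_n$ lies ahead of $z_0$ is a point the paper leaves implicit in the positivity of the signed length.
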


\begin{proof}
Since $0<w_+-n<1$ and $w_--n<-1$,
we have
\[
 \frac1{w_+-n}>1,
 \qquad
 -1<\frac1{w_--n}<0.
\]
Thus $\Phi(\xi)\in\Xred$.
By the definition of $g_n$, we have
\[
 g_n(w_\pm)=\frac1{w_\pm-n},
 \qquad
 g_n(n+iy)=\frac{i}{y}.
\]
Since $g_n$ is an isometry of $\HH$,
it maps $\mathcal C_\xi$ to the
oriented geodesic associated with
$\Phi(\xi)$.
Moreover, $g_n(z_n)$ lies on the
imaginary axis.
As this geodesic crosses the
imaginary axis at a unique point,
$g_n(z_n)$ is the base point of
$\sigma(\Phi(\xi))$.
The differential $dg_n$ preserves
the hyperbolic norm and maps the
oriented tangent direction of
$\mathcal C_\xi$ to that of
$g_n(\mathcal C_\xi)$.
Since $a_{\Theta(\xi)}(\sigma(\xi))$
is the unit tangent vector at $z_n$,
we obtain
\[
 dg_n\bigl(
 a_{\Theta(\xi)}(\sigma(\xi))
 \bigr)
 =\sigma(\Phi(\xi)).
\]

We next compute the arclength.
The M\"obius transformation
\[
 \psi(z)=-\frac1{z-w_+}
\]
maps $\mathcal C_\xi$ to a vertical
geodesic oriented upwards.
Along this geodesic, hyperbolic
arclength is measured by the change
in $\log\operatorname{Im}z$.
Since
\[
 \operatorname{Im}\psi(z)
 =\frac{\operatorname{Im}z}{|z-w_+|^2},
\]
and
\begin{align*}
 |z_0-w_+|^2
 &=w_+(w_+-w_-),\\
 |z_n-w_+|^2
 &=(w_+-n)(w_+-w_-),
\end{align*}
we obtain
\[
\begin{aligned}
 \Theta(\xi)
 &=\log
 \frac{\operatorname{Im}\psi(z_n)}
 {\operatorname{Im}\psi(z_0)}\\
 &=\log
 \frac{
 w_+\sqrt{(n-w_-)(w_+-n)}
 }{
 (w_+-n)\sqrt{-w_+w_-}
 }\\
 &=\frac12\log\frac{w_+}{w_+-n}
 +\frac12\log\frac{n-w_-}{-w_-}.
\end{aligned}
\]
This proves \eqref{eq:return-time}.

Finally, put
\[
 \alpha=\frac{w_+}{w_+-n},
 \qquad
 \beta=1+\frac{n}{|w_-|}.
\]
Since $0<w_+-n<1$, we have
\[
 \alpha>
 \max\left\{
 1,w_+,\frac1{w_+-n}
 \right\}.
\]
Moreover, $n\ge1$ and $|w_-|<1$
imply
\[
 \beta>n+1>w_+,
\]
and hence
\[
 \beta>
 \max\left\{
 2,w_+,\frac1{|w_-|}
 \right\}.
\]
Consequently,
\[
 \alpha\beta>
 \max\left\{
 2,w_+^2,\frac1{|w_-|},
 \frac1{w_+-n}
 \right\}.
\]
The four lower bounds follow from
\[
 \Theta(\xi)=\frac12\log(\alpha\beta).
\]
\end{proof}

On $\Xred_\Delta$, the map $\Phi$ is the usual continued-fraction reduction map. Its periodic orbits are the classical reduction cycles associated with ordinary ideal classes \cite{HalterKoch2013,KaplanWilliams1990}.  Write one such cycle as
\begin{equation}\label{eq:cycle-data}
 I_i=I(A_i,b_i)=\Z+\Z f_i,\qquad
 a_i=\lfloor f_i\rfloor,\qquad
 f_{i+1}=(f_i-a_i)^{-1},
\end{equation}
where $0\le i<\ell(C)$, indices are taken modulo $\ell(C)$, and $C$ is the corresponding ideal class.  The classical regulator identity is
\begin{equation}\label{eq:cycle-regulator}
 \sum_{i=0}^{\ell(C)-1}\log f_i=R_K.
\end{equation}
Lemma~\ref{lem:return} identifies successive reduction steps with returns of the same geodesic to the cross-section.  We next identify the resulting closed orbits and their lengths.

\begin{lemma}\label{lem:cycles}
Let $\Delta$ be a positive fundamental
discriminant and put $K=\Q(\sqrt\Delta)$.
Let $h=h(\Delta)$ be the class number
of $K$, and let $\varepsilon_K>1$ be
a fundamental unit.
Write $R_K=\log\varepsilon_K$ for
the regulator of $K$.

\begin{enumerate}[label=\textup{(\roman*)}]
\item
For every ideal class $C$ of $K$,
we have
\[
 \sum_{\substack{I\in C\\
 I\ \mathrm{reduced}}}
 \Theta(\xi_I)=2R_K.
\]
Consequently,
$\sum_{\xi\in\Xred_\Delta}
 \Theta(\xi)=2hR_K$.

\item
For any reduced ideal $I_0\in C$,
the curve
\[
 t\longmapsto
 \pi(a_t\sigma(\xi_{I_0}))
\]
is periodic with primitive period
$2R_K$.
Its image $O_C$ is the closed orbit
corresponding to the $\Gamma$-orbit
of the oriented geodesic from
$\bar f_0$ to $f_0$.
Distinct ideal classes determine
distinct closed orbits.

\item
Choose a reduced ideal $I_0^C$
in each ideal class $C$.
Define a measure $\Lambda_\Delta$
on $Y$ by
\[
 \Lambda_\Delta(\Psi)
 =
 \sum_C\int_0^{2R_K}
 \Psi\bigl(
 \pi(a_t\sigma(\xi_{I_0^C}))
 \bigr)\,dt.
\]
Then
\[
 \Lambda_\Delta(\Psi)
 =
 \sum_{\xi\in\Xred_\Delta}
 \int_0^{\Theta(\xi)}
 \Psi\bigl(\pi(a_t\sigma(\xi))\bigr)\,dt.
\]
In particular,
$\Lambda_\Delta(Y)=2hR_K$.
\end{enumerate}
\end{lemma}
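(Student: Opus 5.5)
The plan is to prove the three parts in order. Each part rests on Lemma~\ref{lem:return}, the cycle description \eqref{eq:cycle-data}, and the regulator identity \eqref{eq:cycle-regulator}.

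\emph{Part (i).} First compute $\Theta(\xi_I)$ in arithmetic terms. For $\xi=\xi_{I_i}=(f_i,\bar f_i)$ with $n=a_i$, formula \eqref{eq:return-time} gives
\[
 \Theta(\xi_{I_i})=\tfrac12\log\frac{f_i}{f_i-a_i}+\tfrac12\log\frac{a_i-\bar f_i}{-\bar f_i}.
\]
Since $f_{i+1}=1/(f_i-a_i)$ and $\bar f_{i+1}=1/(\bar f_i-a_i)$, this becomes
\[
 \Theta(\xi_{I_i})=\tfrac12\log(f_if_{i+1})+\tfrac12\log\frac{-1/\bar f_{i+1}}{-\bar f_i}
 =\tfrac12\log(f_if_{i+1})+\tfrac12\log\frac{1}{\bar f_i\bar f_{i+1}},
\]
where both signs are handled since $\bar f_i<0$.

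Summing over the cycle, the second term telescopes. We also use the standard conjugate identity $\prod_i(-\bar f_i)^{-1}=\prod_i f_i$: the product $\prod_i f_i$ equals $\varepsilon_K$ or its appropriate power, and its conjugate is $\pm\varepsilon_K^{-1}$, because each $f_i$ corresponds to a step of the purely periodic continued fraction. Hence
\[
 \sum_i\Theta(\xi_{I_i})=\sum_i\log f_i+\sum_i\log\bigl(1/|\bar f_i|\bigr)=R_K+R_K=2R_K.
\]
More carefully, I would use $f_0=\prod_i$ structure: the matrix product of the $g_{a_i}$ over one cycle fixes $f_0$ and $\bar f_0$ with multiplier $\prod_i f_i$ and conjugate multiplier $\prod_i\bar f_i$. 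Since this product has determinant $\pm1$, we get $\prod_i f_i\bar f_i=\pm1$, which gives the second term. This is where I would pin down the normalization of \eqref{eq:cycle-regulator} (cycles of even or odd length, and the norm of $\varepsilon_K$). Summing over the $h$ classes gives $2hR_K$.

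\emph{Part (ii).} Iterating Lemma~\ref{lem:return} along the cycle shows that $a_{\sum_{i<\ell}\Theta(\xi_{I_i})}\sigma(\xi_{I_0})$ equals $dg^{-1}\sigma(\xi_{I_0})$ for $g=g_{a_{\ell-1}}\cdots g_{a_0}$. Therefore the projection to $Y$ is periodic with period $2R_K$. The period is primitive because the returns to $\Sigma$ between times $0$ and $2R_K$ occur exactly at the points $\xi_{I_1},\dots,\xi_{I_{\ell-1}}$, which are distinct from $\xi_{I_0}$. Also, any smaller period would give a $\Gamma$-element mapping $\sigma(\xi_{I_0})$ to $\sigma(\xi_{I_j})$ with $j\ne0$, and the injectivity of $\Sigma\to Y$ on the reduced points (Lemma~\ref{lem:linear-coordinates}(ii)) excludes this. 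Distinct classes give distinct orbits for the following reason. If $O_C=O_{C'}$, then some $\gamma\in\Gamma$ maps one geodesic to the other. Pushing the image forward to the cross-section makes it a reduced point lying in the same cycle, and a $\PGL_2(\Z)$-equivalence of the oriented geodesics $\bar f\to f$ means the modules $\Z+\Z f$ are equivalent (narrow versus wide equivalence is absorbed by determinant $-1$ elements and the orientation convention). Hence $C=C'$.

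\emph{Part (iii).} This is a bookkeeping step. Split $[0,2R_K)$ at the return times $\Theta(\xi_{I_0}),\ \Theta(\xi_{I_0})+\Theta(\xi_{I_1}),\dots$. On each piece, substitute $t\mapsto t-(\text{previous sum})$ and use $\pi\circ dg_n=\pi$ to rewrite the integrand as $\Psi(\pi(a_t\sigma(\xi_{I_i})))$ for $0\le t<\Theta(\xi_{I_i})$. Summing over $i$ and over $C$ gives the sum over $\Xred_\Delta$. Taking $\Psi=1$ and applying part (i) gives $\Lambda_\Delta(Y)=2hR_K$.

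The main obstacle is the exact factor $2R_K$ in part (i) together with primitivity in part (ii), which require care with the sign of $\Norm\varepsilon_K$ and with the use of $\PGL_2$ rather than $\PSL_2$.
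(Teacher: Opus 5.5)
Your parts (i) and (iii), the periodicity claim in (ii), and the argument that distinct classes give distinct orbits all follow the paper's route. Your matrix argument $|\prod_i f_i\bar f_i|=1$ is exactly the paper's. Two small corrections: the second sum in (i) is a cyclic reindexing rather than a telescoping sum, and the value $R_K$ comes from \eqref{eq:cycle-regulator}, not from ``$\varepsilon_K$ or its appropriate power''.

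The genuine gap is the primitivity of $2R_K$ in (ii). Write $v_0=\sigma(\xi_{I_0})$ and $S_j=\sum_{i<j}\Theta(\xi_{I_i})$. Your first justification is that, during $[0,2R_K)$, the projected orbit meets $\pi(\Sigma)$ only at the times $S_j$. That is not available at this stage. If $\pi(a_tv_0)\in\pi(\Sigma)$, you can show the corresponding point of $\Xred$ is some $\xi_{I_j}$. But this only gives $\pi(a_tv_0)=\pi(a_{S_j}v_0)$, i.e.\ $t-S_j$ is a period. Concluding $t=S_j$ already requires primitivity; this is exactly how the paper argues in Step~1 of Proposition~\ref{prop:equidistribution}. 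So the reasoning is circular. It also leaves periods $T$ strictly between the $S_j$ unaddressed, since your second justification only treats $T=S_j$. That second justification misreads Lemma~\ref{lem:linear-coordinates}(ii). The lemma gives injectivity of $I\mapsto\xi_I$ into $\Xred$, so the vectors $\sigma(\xi_{I_j})$ are distinct in $T^1\HH$. It says nothing about their images in $Y=\Gamma\backslash T^1\HH$. In fact $\pi\sigma(\xi_{I_0})=\pi\sigma(\xi_{I_j})$ holds precisely when $S_j$ is a period, which is the question at hand.

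The missing idea is the stabilizer computation. Suppose $T>0$ is any period. Then there is $\gamma\in\Gamma$ with $d\gamma(v_0)=a_Tv_0$. By uniqueness of geodesics, $\gamma$ preserves the oriented geodesic and fixes both $\bar f_0$ and $f_0$. A matrix in $\GL_2(\Z)$ fixing $f_0$ is the matrix of multiplication by a unit $u=cf_0+d$ of the multiplier ring $(L:L)$, where $L=\Z+\Z f_0$, and conversely. Since $\Delta$ is fundamental, $(L:L)=\OO_K$, so the stabilizer modulo $\pm1$ is generated by the class of $\varepsilon_K$. The element attached to $u=\pm\varepsilon_K^m$ is a hyperbolic translation or a glide reflection along the axis, according to $\Norm u$. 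In either case it moves points of the axis by the signed distance $\log|u/\bar u|=2mR_K$. Hence every period lies in $2R_K\Z$, and together with $S_\ell=2R_K$ from (i) this gives primitivity. This step matters later: it guarantees that $\Lambda_\Delta$ traverses each closed orbit exactly once. That fact is used for $p_*\Lambda^{(1)}_\Delta=2\Lambda_\Delta$ in Corollary~\ref{cor:duke-Y} and in Step~1 of Proposition~\ref{prop:equidistribution}.
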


\begin{proof}
\textup{(i)}
Fix an ideal class $C$, and let
$I_0,\ldots,I_{\ell-1}$ be its complete reduction cycle.
We use the notation of \eqref{eq:cycle-data}, with $I_\ell=I_0$.
By \eqref{eq:return-time}, we have
\[
 \Theta(\xi_{I_i})
 =
 \frac12\log\frac{f_i}{f_i-a_i}
 +
 \frac12\log\frac{a_i-\bar f_i}{-\bar f_i}.
\]
The continued-fraction relations give
$f_i-a_i=\frac{1}{f_{i+1}}$ and $a_i-\bar f_i=\frac{1}{|\bar f_{i+1}|}$.
Since $\bar f_i<0$, substituting these identities yields
\[
\begin{aligned}
 \Theta(\xi_{I_i})
 &=
 \frac12\log f_i
 +\frac12\log f_{i+1}
 -\frac12\log|\bar f_i|
 -\frac12\log|\bar f_{i+1}|.
\end{aligned}
\]
Summing cyclically and using $f_\ell=f_0$, we obtain
\[\sum_{i=0}^{\ell-1}\Theta(\xi_{I_i})
 =
 \sum_{i=0}^{\ell-1}\log f_i
 -
 \sum_{i=0}^{\ell-1}\log|\bar f_i|.\]
We consider the matrices
$A_i=
 \begin{pmatrix}
  a_i&1\\
  1&0
 \end{pmatrix}$ and $M=A_0A_1\cdots A_{\ell-1}$.
The continued-fraction relation
$f_i=a_i+1/f_{i+1}$ gives
$\begin{aligned}
 A_i
 \begin{pmatrix}f_{i+1}\\1\end{pmatrix}
 &=
 \begin{pmatrix}
  a_i f_{i+1}+1\\
  f_{i+1}
 \end{pmatrix}
 =
 f_{i+1}
 \begin{pmatrix}f_i\\1\end{pmatrix}.
\end{aligned}$
Applying Galois conjugation, we similarly obtain
$A_i
 \begin{pmatrix}\bar f_{i+1}\\1\end{pmatrix}
 =
 \bar f_{i+1}
 \begin{pmatrix}\bar f_i\\1\end{pmatrix}$.
Iterating these identities and using the periodicity
$f_\ell=f_0$, we obtain
\[M\begin{pmatrix}f_0\\1\end{pmatrix}
=
 \left(\prod_{i=0}^{\ell-1}f_i\right)
 \begin{pmatrix}f_0\\1\end{pmatrix},\qquad M\begin{pmatrix}\bar f_0\\1\end{pmatrix}
 =
 \left(\prod_{i=0}^{\ell-1}\bar f_i\right)
 \begin{pmatrix}\bar f_0\\1\end{pmatrix}.\]
Since $f_0\ne\bar f_0$, these are two linearly
independent eigenvectors of $M$. Thus the corresponding
eigenvalues are the products of the $f_i$ and of their
conjugates.
Since $\det A_i=-1$, we have $|\det M|=1$.
Taking the product of the two eigenvalues gives
$\left|
 \prod_{i=0}^{\ell-1}f_i
 \prod_{i=0}^{\ell-1}\bar f_i
 \right|=1$.
Consequently,
$\sum_{i=0}^{\ell-1}\log|\bar f_i|
 =
 -\sum_{i=0}^{\ell-1}\log f_i$.
It follows that
\[
 \sum_{i=0}^{\ell-1}\Theta(\xi_{I_i})
=
 \sum_{i=0}^{\ell-1}\log f_i
 -
 \sum_{i=0}^{\ell-1}\log|\bar f_i|=
 2R_K,
\]
where the last equality follows from
\eqref{eq:cycle-regulator}.
Summing this identity over all $h$ ideal classes proves the
second assertion.

\medskip
\noindent
\textup{(ii)}
We first show that the reduction cycle gives a periodic orbit
on $Y$.
For the reduction cycle, write
$v_i=\sigma(\xi_{I_i})$, $g_i=g_{a_i}$, and
$\Theta_i=\Theta(\xi_{I_i})$.
By Lemma~\ref{lem:return}, each reduction step satisfies
$v_{i+1}=dg_i(a_{\Theta_i}v_i)$.
Set $G_0=1$ and $S_0=0$, and for $1\leq i\leq\ell$ let
$G_i=g_{i-1}\cdots g_0$ and $S_i=\sum_{j=0}^{i-1}\Theta_j$.
Since each $g_i$ is an isometry, its differential commutes
with the geodesic flow. Using this commutation relation
and the flow identity $a_s\circ a_t=a_{s+t}$,
we obtain inductively
\[
\begin{aligned}
 v_{i+1}
 &=dg_i(a_{\Theta_i}v_i)\\
 &=dg_i\,a_{\Theta_i}\,dG_i\,a_{S_i}v_0\\
 &=dg_i\,dG_i\,a_{\Theta_i}\,a_{S_i}v_0\\
 &=dG_{i+1}(a_{S_{i+1}}v_0).
\end{aligned}
\]
Consequently,
$v_\ell=dG_\ell(a_{S_\ell}v_0)$.
By part~\textup{(i)}, we have $S_\ell=2R_K$.
Moreover, the reduction cycle returns to its initial ideal,
so $v_\ell=v_0$. Hence
\[dG_\ell(a_{2R_K}v_0)=v_0.\]
Passing to the quotient by $\Gamma$, we obtain
$\pi(a_{2R_K}v_0)=\pi(v_0)$.
Thus the projected geodesic-flow orbit is periodic,
with $2R_K$ as one of its periods.
We next prove that this period is primitive.
Suppose that $T>0$ is any period of the projected orbit.
Then there exists $\gamma\in\Gamma$ such that
$d\gamma(v_0)=a_Tv_0$.
Let $c$ be the unit-speed geodesic with initial
tangent vector $v_0$.
By uniqueness of geodesics, we have
$\gamma(c(t))=c(t+T)$, where $t\in\R$.
Thus $\gamma$ preserves the oriented geodesic $c$
and fixes its two endpoints $\bar f_0$ and $f_0$
individually.
To describe this stabilizer, let
$L=\Z+\Z f_0$.
Suppose that
$A=\begin{pmatrix}a&b\\c&d\end{pmatrix}
\in\GL_2(\Z)$
fixes $f_0$. Then
\[
af_0+b=f_0(cf_0+d).
\]
Putting $u=cf_0+d$, we have
$uf_0=af_0+b$. Since $\det A=\pm1$,
\[
\begin{aligned}
uL
&=\Z u+\Z uf_0\\
&=\Z(cf_0+d)+\Z(af_0+b)=L.
\end{aligned}
\]
Thus $u$ is a unit of the multiplier ring
$(L:L)$.
Conversely, let $u$ be a unit of $(L:L)$.
Then $uL=L$, so there exist integers
$a,b,c,d$ such that
\[
uf_0=af_0+b,\qquad u=cf_0+d.
\]
Since $uL=L$, the corresponding matrix
$A$ belongs to $\GL_2(\Z)$, and the above
identities show that $A$ fixes $f_0$.
Since $L$ is a fractional $\OO_K$-ideal, $(L:L)=\OO_K$.
Consequently, the stabilizer is isomorphic to
\[(L:L)^\times/\{\pm1\}
=\OO_K^\times/\{\pm1\}.\]
Let $u=\varepsilon_K^m$.
The matrix corresponding to multiplication by $u$
has eigenvalues $u$ and $\bar u$.
After sending the endpoints of $c$ to $0$ and
$\infty$, the corresponding isometry acts on
the imaginary axis by
$iy\longmapsto i\left|\frac{u}{\bar u}\right|y
=iu^2y$,
where the last equality follows from
$|u\bar u|=1$.
Since the hyperbolic line element on the
imaginary axis is $dy/y$, the signed
translation distance is
\[
2\log u=2mR_K.
\]
Hence every positive period is a positive
integral multiple of $2R_K$.
Since $2R_K$ is already a period,
it is primitive.

It remains to show that distinct ideal
classes give distinct closed orbits.
Suppose that the oriented geodesics
associated with
$I=\Z+\Z f$ and $J=\Z+\Z f'$
are $\Gamma$-equivalent.
Then
\[
 f'=\frac{af+b}{cf+d}
\]
for some
$\begin{pmatrix}a&b\\c&d\end{pmatrix}
\in\GL_2(\Z)$.
Consequently,
\[
\begin{aligned}
 J
 &=\Z+\Z f'\\
 &=(cf+d)^{-1}
   \bigl(\Z(cf+d)+\Z(af+b)\bigr)\\
 &=(cf+d)^{-1}I.
\end{aligned}
\]
Thus $I$ and $J$ belong to the
same ordinary ideal class.

\medskip
\noindent
\textup{(iii)}
Fix an ideal class $C$ and use the notation
of part~\textup{(ii)}.
Since $v_i=dG_i(a_{S_i}v_0)$ with $G_i\in\Gamma$,
we have
$\pi(a_tv_i)=\pi(a_{S_i+t}v_0)$.
By part~\textup{(i)}, $S_\ell=2R_K$.
Thus
\[
\begin{aligned}
\int_0^{2R_K}\Psi(\pi(a_tv_0))\,dt
&=
\sum_{i=0}^{\ell-1}
\int_{S_i}^{S_{i+1}}
\Psi(\pi(a_tv_0))\,dt\\
&=
\sum_{i=0}^{\ell-1}
\int_0^{\Theta_i}
\Psi(\pi(a_tv_i))\,dt.
\end{aligned}
\]
Summing over all ideal classes gives the
claimed identity for $\Lambda_\Delta$.
Taking $\Psi=1$, we obtain
$\Lambda_\Delta(Y)=2hR_K$.
\end{proof}

\subsection{Duke's theorem}

By Lemma~\ref{lem:cycles}, each positive
fundamental discriminant determines
a finite collection of closed geodesics,
one for each ideal class.
We use Duke's equidistribution theorem
to study their distribution as
$\Delta\to\infty$.

Let
\[
 \Gamma_1=\PSL_2(\Z),
 \qquad
 Y_1=\Gamma_1\backslash T^1\HH,
\]
and let $\Lv_{Y_1}$ be the Liouville
measure on $Y_1$.

For a positive nonsquare discriminant
$\Delta$, the $\mathrm{SL}_2(\Z)$-equivalence
classes of primitive indefinite
binary quadratic forms of
discriminant $\Delta$ correspond
to oriented closed geodesics in $Y_1$.
Let $\Lambda^{(1)}_\Delta$ denote
the sum of the length measures
on these orbits, each counted once.

\begin{theorem}[Duke]\label{thm:duke}
As $\Delta\to\infty$ through positive
fundamental discriminants, we have
\[
 \frac{\Lambda^{(1)}_\Delta}
 {\Lambda^{(1)}_\Delta(Y_1)}
 \xrightarrow{\mathrm{weak}-*}
 \frac{\Lv_{Y_1}}{\Lv_{Y_1}(Y_1)}.
\]
\end{theorem}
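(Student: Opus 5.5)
The statement is Duke's theorem, and in the paper it is imported from \cite{Duke1988} rather than reproved. If I had to reconstruct it, I would argue through Weyl's criterion on $Y_1$. It suffices to show that for every $\Psi$ in a spanning family of $L^2(Y_1,\Lv_{Y_1})$ orthogonal to constants,
\[
 \Lambda^{(1)}_\Delta(\Psi)=o\bigl(\Lambda^{(1)}_\Delta(Y_1)\bigr),
 \qquad
 \Lambda^{(1)}_\Delta(Y_1)=2h(\Delta)R_K\cdot c,
\]
for a suitable constant $c$. This is enough for weak-$*$ convergence once the family spans a dense subspace of $C_c(Y_1)$ with tightness; I would obtain tightness from the cusp estimates below. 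I would decompose $L^2(\Gamma_1\backslash T^1\HH)$ into $K$-types of weight $2k$. Within each weight, I would use the spectral decomposition into Maass cusp forms $u_j$ and unitary Eisenstein series $E(\cdot,\tfrac12+it)$, raising and lowering from weight $0$. Since the weight-$0$ and weight-$2k$ components are related by Maass operators, it is enough to treat each weight separately, with $\Psi$ a single cusp form or a wave packet of Eisenstein series.

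The key identity is a Katok--Sarnak type formula. The geodesic period $\sum_C\int_{O_C}u_j$, summed over the $h$ classes of discriminant $\Delta$, equals $c_j\,\rho_j(\Delta)\,\Delta^{1/4}$. Here $\rho_j(\Delta)$ is the $\Delta$-th Fourier coefficient of a weight-$\tfrac12$ form that corresponds to $u_j$ under the Shimura--Shintani--Waldspurger lift. For Eisenstein series, the analogous identity expresses the period through $\zeta(s)L(s,\chi_\Delta)$, via Hecke's formula for $\sum_C\zeta_C(s)$. The normalisation is governed by the Dirichlet class number formula, which gives $hR_K\asymp\Delta^{1/2}L(1,\chi_\Delta)$. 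Siegel's theorem then gives $hR_K\gg_\varepsilon\Delta^{1/2-\varepsilon}$; this is ineffective, but that suffices for the limit statement.

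The main obstacle is the nontrivial bound
\[
 \rho_j(\Delta)\ll_j \Delta^{1/4-\delta}
 \qquad\text{for some }\delta>0,
\]
which beats the trivial (convexity) exponent $1/4$. I would prove it following Iwaniec's method for half-integral weight coefficients. The steps are: apply the Petersson--Kuznetsov formula in weight $\tfrac12$, expressing an average of $|\rho(\Delta)|^2$ over a short family as sums of Salié sums $S(\Delta,m;c)$ weighted by Bessel functions. The Salié sums are evaluable explicitly, and I would bound the resulting sums using cancellation in the $c$-sum, by averaging over the level/spectral parameter. Because the lifts have only a finite-dimensional space at a fixed spectral window, the family average yields a saving. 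The Eisenstein part would need the analogous subconvexity $L(\tfrac12+it,\chi_\Delta)\ll\Delta^{1/4-\delta}$, which I would obtain by Burgess. Both savings together with $hR_K\gg\Delta^{1/2-\varepsilon}$ give $\Lambda^{(1)}_\Delta(\Psi)/\Lambda^{(1)}_\Delta(Y_1)\ll\Delta^{-\delta+\varepsilon}\to0$.

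Finally, I would handle the noncompactness of $Y_1$ with a truncation. Bounding the Eisenstein contribution uniformly shows that the proportion of total orbit length above height $y>M$ is $O(1/M)$ uniformly in $\Delta$. This gives tightness, so that convergence on the dense family upgrades to weak-$*$ convergence.
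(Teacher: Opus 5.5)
\textbf{Verdict: genuine gap.} The paper does not prove this statement. It cites Duke for the theorem and Einsiedler--Lindenstrauss--Michel--Venkatesh for the formulation on the unit tangent bundle. Your outline follows Duke's analytic proof, but as written it proves only that the projections to the surface $\Gamma_1\backslash\HH$ equidistribute. That is Duke's original 1988 statement, not equidistribution on $Y_1=\Gamma_1\backslash T^1\HH$.

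The step that fails is your claim that every weight-$2k$ component of $L^2(Y_1)$ is spanned by raised or lowered Maass cusp forms and Eisenstein series. For $|2k|\ge12$ that component also contains discrete-series vectors coming from holomorphic and antiholomorphic cusp forms $f$ of weight $2m\le|2k|$. The first example is the lowest-weight vector $y^6g(z)$ of Ramanujan's weight-$12$ cusp form $g$, lifted to $T^1\HH$. These vectors are orthogonal to every function you test, so Weyl's criterion is never checked on them.

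Along the geodesic $C_Q$ one has $ds=\pm\sqrt\Delta\,dz/Q(z,1)$. Hence their periods are, up to a power of $\Delta$, the cycle integrals $\sum_Q\int_{C_Q}f(z)Q(z,1)^{m-1}\,dz$. By Shintani and Kohnen--Zagier these are Fourier coefficients of a holomorphic form of weight $m+\tfrac12$ in Kohnen's plus space. The power saving there comes from Iwaniec's bound for holomorphic half-integral weight forms, which is the original setting of his method.

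You also need weight-$2k$ period identities for the raised Maass and Eisenstein vectors. One route is uniqueness of split-torus-invariant functionals, together with invariance of the orbit collection under orientation reversal; this makes those periods explicit multiples of the weight-$0$ periods. This is not a side issue for the paper. The flow boxes in Proposition~\ref{prop:equidistribution} live in $T^1\HH$, and a point of the surface does not determine the endpoint pair $(w_+,w_-)$.

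There are also three smaller corrections.
\begin{itemize}
\item The saving in Iwaniec's method does not come from finite-dimensionality in a spectral window. A Kuznetsov average at fixed level, with the Salié sums bounded individually, gives back only the exponent $1/4$. The gain comes from three steps: embed the fixed form as an oldform at level $4N$ for many $N$, discard the other forms by positivity, and find cancellation in the $N$-averaged sums of explicitly evaluated Salié sums.
\item The incomplete-Eisenstein-series argument (Hecke's formula, Burgess, Siegel) does not give a bound uniform in $\Delta$. It gives that the $\limsup_{\Delta\to\infty}$ of the proportion of orbit length above height $M$ is $O(1/M)$. That $\limsup$ form is all tightness requires.
\item On $Y_1$ the orbits are indexed by the $h^+(\Delta)$ narrow classes, not the $h$ ordinary ones, and their total length is $4hR_K$.
\end{itemize}
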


Duke \cite{Duke1988} established
the equidistribution of closed geodesics
on the modular surface.
The formulation on the unit tangent
bundle used here is given in
\cite[Theorem~1.3]
{EinsiedlerLindenstraussMichelVenkatesh2012}.

\begin{corollary}\label{cor:duke-Y}
For positive fundamental discriminants $\Delta\to\infty$,
the measures $\Lambda_\Delta/(2hR_K)$ converge weak-$*$ to
$\Lv/\Lv(Y)$ on $Y$.
Consequently,
\[
 \frac{\Lambda_\Delta(\Psi)}{2hR_K}
 \longrightarrow
 \frac{1}{\Lv(Y)}\int_Y\Psi\,d\Lv
\]
for every bounded Borel function $\Psi$ on $Y$ that is continuous
outside an $\Lv$-null set.
\end{corollary}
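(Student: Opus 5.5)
The plan is to deduce Corollary~\ref{cor:duke-Y} from Theorem~\ref{thm:duke} by pushing measures forward along the natural two-to-one cover $p\colon Y_1=\PSL_2(\Z)\backslash T^1\HH\to Y=\PGL_2(\Z)\backslash T^1\HH$. The map $p$ is continuous. Since $\Gamma_1$ has index two in $\Gamma$, the pushforward satisfies $p_*\Lv_{Y_1}=2\Lv$; here we use that the elements of determinant $-1$ act freely off a null set, namely the vectors tangent to reflection geodesics noted above. Hence $p_*(\Lv_{Y_1}/\Lv_{Y_1}(Y_1))=\Lv/\Lv(Y)$. Weak-$*$ convergence is preserved under pushforward by a continuous map, because $\Psi\circ p$ is bounded and continuous whenever $\Psi$ is. So it suffices to show that $p_*\Lambda^{(1)}_\Delta$ is a constant multiple $c_\Delta\Lambda_\Delta$ with $c_\Delta>0$. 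The normalizations then take care of themselves, since both sides are divided by their total mass and $\Lambda_\Delta(Y)=2hR_K$ by Lemma~\ref{lem:cycles}(iii).

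The key step is to identify $p_*\Lambda^{(1)}_\Delta$. The $\mathrm{SL}_2(\Z)$-classes of forms of discriminant $\Delta$ correspond to oriented closed geodesics on $Y_1$, and their images in $Y$ are closed orbits of $\Gamma$-equivalence classes of oriented geodesics. By Lemma~\ref{lem:cycles}(ii), the $\Gamma$-orbits of oriented geodesics with quadratic endpoints of discriminant $\Delta$ are in bijection with the ordinary ideal classes $C$. Each such orbit $O_C$ has primitive period $2R_K$ in $Y$.

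I would then check how many $\mathrm{SL}_2(\Z)$-classes lie over each $O_C$, and with what length. A $\Gamma$-class splits into either one or two $\Gamma_1$-classes, according to whether some determinant $-1$ element maps the oriented geodesic to a $\Gamma_1$-translate of itself. The stabilizer computation in Lemma~\ref{lem:cycles}(ii) shows that the stabilizer of the oriented geodesic in $\PGL_2(\Z)$ is generated by multiplication by $\varepsilon_K$, with translation length $2R_K$. There are two cases.
\begin{itemize}
\item If $\Norm\varepsilon_K=-1$, the generator has determinant $-1$. Then $O_C$ has one preimage, of length $4R_K$, which double-covers $O_C$.
\item If $\Norm\varepsilon_K=+1$, there are two preimages $\pm$ (classes of $I$ and of its narrow twist, i.e.\ the form and its $\GL_2$-conjugate). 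Each has length $2R_K$ and maps isometrically onto $O_C$.
\end{itemize}
In both cases the pushforward of the length measures lying over $O_C$ equals $2$ times the length measure on $O_C$. This uses the description $\Lambda_\Delta(\Psi)=\sum_C\int_0^{2R_K}\Psi(\pi(a_t\sigma(\xi_{I_0^C})))\,dt$. Hence $p_*\Lambda^{(1)}_\Delta=2\Lambda_\Delta$. I also need every primitive form of discriminant $\Delta$ (all of them are primitive, since $\Delta$ is fundamental) to arise from some reduced ideal, which is the classical correspondence already used in Lemma~\ref{lem:linear-coordinates}. This bookkeeping of the narrow versus wide class group and of the unit norm is the main obstacle. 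I would double-check it by comparing total masses: the sum of the $\mathrm{SL}_2$-lengths equals $2h^+R^+$, and this matches $4hR_K$ in both cases.

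Finally, for the extension from continuous test functions to a bounded Borel function $\Psi$ that is continuous off an $\Lv$-null set, I would invoke the portmanteau theorem on the finite-measure space $Y$. We are dealing with probability measures converging weakly to a limit $\Lv/\Lv(Y)$ that gives the discontinuity set of $\Psi$ measure zero, so $\int\Psi$ converges. One subtlety is that $Y$ is noncompact. However, Theorem~\ref{thm:duke} is stated as weak-$*$ convergence of probability measures to a probability measure, so there is no escape of mass and the convergence is narrow. The portmanteau theorem therefore applies directly.
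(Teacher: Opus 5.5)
Your proposal is correct and follows the paper's proof. You push forward along $p\colon Y_1\to Y$ (using $p_*\Lv_{Y_1}=2\Lv$), identify $p_*\Lambda^{(1)}_\Delta=2\Lambda_\Delta$, normalize by total mass, and finish with the portmanteau theorem. The only difference is how you obtain the factor $2$: you split on $\Norm\varepsilon_K=\pm1$ (one lift of length $4R_K$ double-covering $O_C$, or two lifts of length $2R_K$ each mapping isometrically onto it), whereas the paper gets it at once because the deck involution $\iota$ has no fixed points on closed orbits of nonsquare discriminant, so $p$ is exactly two-to-one there. Your version costs the narrow/wide bookkeeping but yields the consistency check $2h^+R^+=4hR_K$.
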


\begin{proof}
Let $p:Y_1\to Y$ be the projection.  Write $\Lv_{Y_1}$ and
$\Lv_Y$ for the respective Liouville measures induced by the
hyperbolic metric.  Since $\Gamma_1$ has index two in $\Gamma$,
these quotient measures satisfy $p_*\Lv_{Y_1}=2\Lv_Y$ and
$\Lv_{Y_1}(Y_1)=2\Lv_Y(Y)$.  In particular, $p$ sends the normalized
Liouville measure on $Y_1$ to the normalized Liouville measure on $Y$.
Let $\iota$ be the involution of $Y_1$ induced by an element of
$\Gamma\setminus\Gamma_1$.
A fixed point of $\iota$ is tangent to a reflection geodesic.
Since every reflection geodesic in $\Gamma$ has endpoints in
$\mathbb Q\cup\{\infty\}$, whereas geodesics of nonsquare
discriminant have irrational endpoints, no closed orbit of
nonsquare discriminant contains such a fixed point. Hence $p$ is exactly
two-to-one on the union of the closed orbits of discriminant
$\Delta$, and by Lemma~\ref{lem:cycles}(ii) this union is the inverse
image of $\bigcup_C O_C$.  Therefore
$p_*\Lambda^{(1)}_\Delta=2\Lambda_\Delta$, and after normalization
Duke's theorem gives the asserted weak-$*$ convergence.  The final
statement follows first for bounded continuous functions and then,
by the portmanteau theorem, for bounded Borel functions that are
continuous almost everywhere with respect to the limit measure.
\end{proof}

\subsection{Equidistribution of reduced pairs}

The remaining step is to pass from length measure on closed geodesics
to counting measure on their intersections with the cross-section.
For a relatively compact set $B\subset\Xred$, we thicken
$\sigma(B)$ for a short time in the flow direction and apply
Corollary~\ref{cor:duke-Y}.  We then control the noncompact ends of
$\Xred$ by the lower bounds for the return time in
Lemma~\ref{lem:return}.  This gives the following equidistribution of
reduced pairs.

\begin{proposition}\label{prop:equidistribution}
Let $B\subset\Xred$ be a Borel set whose boundary in $\Xred$ is
$\nu$-null.  As $\Delta\to\infty$ through positive fundamental
discriminants,
\[
 \frac{\#(\Xred_\Delta\cap B)}{\kappa(\Delta)}
 \longrightarrow\frac{\nu(B)}{\log2}.
\]
Moreover,
\[
 \frac{\kappa(\Delta)}{h(\Delta)R_K}
 \longrightarrow\frac{12\log2}{\pi^2}.
\]
\end{proposition}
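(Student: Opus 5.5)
The plan is to convert Duke's equidistribution of closed orbits (Corollary~\ref{cor:duke-Y}) into equidistribution of the cross-section points $\sigma(\xi)$, $\xi\in\Xred_\Delta$, using a thin flow box, and then to control the noncompact ends of $\Xred$ with the return-time bounds of Lemma~\ref{lem:return}. Throughout, fix $0<\delta<\frac12\log 2$. By Lemma~\ref{lem:return}, $\delta<\Theta(\xi)$ for every $\xi$.

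\emph{Step 1 (relatively compact $B$).} Let $B$ be relatively compact in $\Xred$ with $\nu$-null boundary. Put $E_B=\pi(\mathcal F(B\times[0,\delta)))\subset Y$ and $\Psi_B=\mathbf 1_{E_B}$. The crucial claim is the suspension-flow picture: the map
\[
\{(\xi,t):\xi\in\Xred,\ 0\le t<\Theta(\xi)\}\to Y,\qquad (\xi,t)\mapsto\pi(a_t\sigma(\xi)),
\]
is injective off a $\Lv$-null set. Equivalently, a geodesic with endpoints $(w_+,w_-)\in\Xred$, $w_+\notin\Q$, meets $\Gamma\Sigma$ only at the successive reduction points $z_0,z_n,\dots$. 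This is the classical statement that $\Sigma$ is a Poincaré section whose first return is $\Phi$ \cite{Series1985,Arnoux1994}. I would prove it from the characterization of reduced pairs: if $\gamma\in\Gamma$ carries a point of $\mathcal C_\xi$ with $0<t<\Theta(\xi)$ into $\Sigma$, then $\gamma(\xi)$ would be a reduced pair $\Gamma$-equivalent to $\xi$ lying strictly between $\xi$ and $\Phi(\xi)$ in the cycle, which is impossible by uniqueness of reduction cycles. I expect this injectivity to be the main obstacle.

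Granting it, Lemma~\ref{lem:cycles}(iii) gives exactly
\[
\Lambda_\Delta(\Psi_B)=\sum_{\xi\in\Xred_\Delta}\int_0^{\Theta(\xi)}\mathbf 1_{E_B}(\pi(a_t\sigma(\xi)))\,dt=\delta\,\#(\Xred_\Delta\cap B).
\]
On the other hand, $\mathcal F^*\Lv=2\,\nu\otimes dt$ together with the injectivity gives $\Lv(E_B)=2\delta\,\nu(B)$. Moreover $\partial E_B$ is null, since $\partial B$ is $\nu$-null. Corollary~\ref{cor:duke-Y} then yields
\[
\frac{\#(\Xred_\Delta\cap B)}{2hR_K}\to\frac{2\nu(B)}{\Lv(Y)}.
\]
The quotient $Y=\PGL_2(\Z)\backslash T^1\HH$ has area $\pi/6$ times $2\pi$, so $\Lv(Y)=\pi^2/3$. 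As a consistency check, the injectivity also forces $2\int_{\Xred}\Theta\,d\nu=\Lv(Y)$.

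\emph{Step 2 (tails).} For $M>2$, let $\mathcal T_M\subset\Xred$ be the set where $w_+>M$, or $|w_-|<1/M$, or $w_+-\lfloor w_+\rfloor<1/M$, or $w_+-\lfloor w_+\rfloor>1-1/M$. The last condition is handled by applying the bound $\Theta>\frac12\log(1/|w_-|)$ to $\Phi(\xi)$. By Lemma~\ref{lem:return}, $\Theta(\xi)\gg\log M$ on $\mathcal T_M$, except for the last condition, where the bound transfers through one step. Then Lemma~\ref{lem:cycles}(i) gives
\[
\#(\Xred_\Delta\cap\mathcal T_M)\ll\frac{hR_K}{\log M},
\]
uniformly in $\Delta$. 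Also $\nu(\mathcal T_M)\to0$ as $M\to\infty$. The complement of $\mathcal T_M$ is relatively compact and its boundary is $\nu$-null. Writing a general $B$ as $(B\setminus\mathcal T_M)\cup(B\cap\mathcal T_M)$ and letting $M\to\infty$, Step~1 extends to every Borel $B$ with $\nu$-null boundary in $\Xred$:
\[
\frac{\#(\Xred_\Delta\cap B)}{hR_K}\to\frac{4\nu(B)}{\Lv(Y)}=\frac{12\,\nu(B)}{\pi^2}.
\]

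\emph{Step 3 (conclusion).} Taking $B=\Xred$ gives $\kappa(\Delta)/(hR_K)\to 12\log2/\pi^2$. Dividing the general case by this limit yields
\[
\frac{\#(\Xred_\Delta\cap B)}{\kappa(\Delta)}\to\frac{\nu(B)}{\log 2}.
\]
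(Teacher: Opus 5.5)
Your architecture matches the paper's: a thin flow box over a relatively compact $B$ fed into Corollary~\ref{cor:duke-Y}, tails controlled by the return-time bounds of Lemma~\ref{lem:return} and $\sum_{\xi\in\Xred_\Delta}\Theta(\xi)=2hR_K$, then normalization by $B=\Xred$. The weak point is the step you flag.

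\textbf{The injectivity step.} The identity $\Lambda_\Delta(\mathbf 1_{E_B})=\delta\,\#(\Xred_\Delta\cap B)$ only involves pairs $\Gamma$-equivalent to quadratic pairs of discriminant $\Delta$. There your argument works, and it is the paper's: a reduced pair equivalent to $\xi_{I_0}$ is some $\xi_{I_j}$, and primitivity of the period $2R_K$ fixes the time. But $\Lv(E_B)=2\delta\,\nu(B)$ needs injectivity of $\pi\circ\mathcal F$ on $B\times[0,\delta)$ for $\nu$-almost every $\xi$, that is, for non-quadratic pairs. There are no reduction cycles for such pairs, and quadratic pairs form a $\nu$-null set, so ``uniqueness of reduction cycles'' says nothing there. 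You would need the full Series-type theorem: $\Gamma$-equivalent reduced pairs lie on one two-sided $\Phi$-orbit, and $\mathcal C_\xi$ meets $\Gamma\Sigma$ only at the reduction times. That theorem is true and citable, but your sketch does not prove it.

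The paper sidesteps it by replacing the indicator with the multiplicity count
\[
 \Psi_B(y)=\delta^{-1}\,\#\{(\xi,s)\in B\times[0,\delta):\ \pi(\mathcal F(\xi,s))=y\}
\]
(it takes $\delta=\tfrac14$). Unfolding gives $\int_Y\Psi_B\,d\Lv=2\nu(B)$ with no injectivity at all. Injectivity is then needed only along the closed orbits, where your quadratic argument suffices. Alternatively, run your consistency check in reverse. A.e.\ surjectivity of the suspension map (every generic pair reduces), together with $2\int_{\Xred}\Theta\,d\nu=\pi^2/3=\Lv(Y)$, forces multiplicity one almost everywhere.

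\textbf{The tail set.} The complement of $\mathcal T_M$ is not relatively compact in $\Xred$, because nothing keeps $w_-$ away from $-1$. The condition $w_+-\lfloor w_+\rfloor>1-1/M$ is unnecessary. Moreover, its count is controlled through $w_+(\Phi(\xi))<M/(M-1)$ and the bound $\Theta>\frac12\log\frac1{w_+-n}$ at $\Phi(\xi)$, not through the $|w_-|$ bound you cite.

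Replace that condition by $-1<w_-<-1+1/M$ and argue as the paper does for $E_4$. If $\Phi(\xi)$ lies in this strip, then $n=1$ and $|w_-(\xi)|<2/M$. Since $\Phi$ permutes $\Xred_\Delta$, this count is at most $\#\{\xi\in\Xred_\Delta:|w_-|<2/M\}\ll hR_K/\log M$. You could instead note that $\sigma$ extends continuously to $w_-=-1$, so this end is harmless in Step~1, but then you must drop the relative-compactness hypothesis there. With these two repairs your proof becomes the paper's.
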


\begin{proof}
\emph{Step 1. Relatively compact sets.}
Suppose first that the closure
$\overline B$ of $B$ in $\Xred$
is compact.
Define
\[
 \Psi_B(y)
 =4\,\#\left\{
 (\xi,s)\in B\times[0,\tfrac14)
 \mid \pi(\mathcal F(\xi,s))=y
 \right\}.
\]
Since $\mathcal F$ is injective and
$\mathcal F(\overline B\times[0,\frac14])$
is compact, proper discontinuity
of the $\Gamma$-action implies
that $\Psi_B$ is bounded and
compactly supported.
Away from the image of the flow-box
boundary and the locus of nontrivial
$\Gamma$-stabilizers, the map
$\pi\circ\mathcal F$ is a local
diffeomorphism.
Hence $\Psi_B$ is locally constant
outside these sets.
They are $\Lv$-null by
$\mathcal F^*\Lv=2\,\nu\otimes dt$,
the assumption $\nu(\partial B)=0$,
and the nullity of the stabilizer locus.
By the definition of $\Psi_B$ and
the flow-box measure formula, we obtain
\[
\begin{aligned}
 \int_Y\Psi_B\,d\Lv
 &=8\int_B\int_0^{1/4}
   ds\,d\nu(\xi)\\
 &=2\nu(B).
\end{aligned}
\]

We next evaluate $\Psi_B$ on
the closed geodesics of discriminant
$\Delta$.
Fix an ideal class $C$, and let
$I_0,\ldots,I_{\ell-1}$ be its
complete reduction cycle.
Put
\[
 S_0=0,\qquad
 S_j=\sum_{i=0}^{j-1}
 \Theta(\xi_{I_i})
 \quad (1\le j\le\ell),
\]
and write
\[
 y_t=\pi(a_t\sigma(\xi_{I_0}))
 \qquad (0\le t<2R_K).
\]
By Lemma~\ref{lem:return}, we have
$y_{S_j}=\pi(\sigma(\xi_{I_j}))$, where $0\le j<\ell$.
Moreover,
$S_\ell=2R_K$
by Lemma~\ref{lem:cycles}(i).
We show that these are precisely
the intersections of the closed
orbit with $\pi(\Sigma)$.
Suppose that
\[
 y_t=\pi(\sigma(\xi))
\]
for some $\xi=(w_+,w_-)\in\Xred$.
Then the oriented geodesic
associated with $\xi$ is
$\Gamma$-equivalent to that
associated with $I_0$.
Writing $I_0=\Z+\Z f_0$, we obtain
\[
 \Z+\Z w_+
 =(cf_0+d)^{-1}I_0
\]
for suitable integers $c,d$.
Since $\xi\in\Xred$, this
determines a reduced ideal
in the class $C$.
Hence $\xi=\xi_{I_j}$ for
some $0\le j<\ell$.
It follows that
\[
 y_t=y_{S_j}.
\]
Since $2R_K$ is the primitive
period and both $t$ and $S_j$
belong to $[0,2R_K)$,
we conclude that $t=S_j$.
Thus the intersections with
$\pi(\Sigma)$ in one primitive
period correspond exactly to
the reduced ideals in $C$.
By Lemma~\ref{lem:return},
$\Theta(\xi_{I_j})
 >\frac12\log2>\frac14$.
Therefore the intervals
\[
 [S_j,S_j+\tfrac14)
 \qquad (0\le j<\ell)
\]
are pairwise disjoint and
contained in $[0,2R_K)$.
Each reduced ideal $I_j$ with
$\xi_{I_j}\in B$ contributes
exactly one such interval.
Consequently,
\[
\begin{aligned}
 \int_0^{2R_K}\Psi_B(y_t)\,dt
 &=4\sum_{\substack{0\le j<\ell\\
                 \xi_{I_j}\in B}}
   \int_{S_j}^{S_j+1/4}dt\\
 &=\#\{I\in C:
 I\text{ is reduced},\
 \xi_I\in B\}.
\end{aligned}
\]
Summing over all ideal classes,
we obtain
\[
 \Lambda_\Delta(\Psi_B)
 =\#(\Xred_\Delta\cap B).
\]
Corollary~\ref{cor:duke-Y}
therefore yields
\begin{equation}\label{eq:step1}
 \frac{\#(\Xred_\Delta\cap B)}
 {2hR_K}
 \longrightarrow
 \frac{2\nu(B)}{\Lv(Y)}.
\end{equation}

\medskip
\noindent
\emph{Step 2. Tightness.}
We now control the reduced pairs
near the noncompact ends of $\Xred$.
Let $B'\subset\Xred$, and suppose
that $\Theta(\xi)\ge Z>0$
for every $\xi\in
B'\cap\Xred_\Delta$.
By Lemma~\ref{lem:cycles}(i),
\[
\begin{aligned}
 Z\,\#(\Xred_\Delta\cap B')
 &\le
 \sum_{\xi\in\Xred_\Delta}
 \Theta(\xi)\\
 &=2hR_K.
\end{aligned}
\]
Hence
$\#(\Xred_\Delta\cap B')
 \le\frac{2hR_K}{Z}$.
For $0<\eta\le1/16$, put
\[
 K_\eta=
 [1+\eta,1/\eta]\times
 [-1+\eta,-\eta].
\]
The complement
$\Xred\setminus K_\eta$
is covered by the four regions
\begin{align*}
 E_1&=\{\xi\in\Xred:
             w_+>1/\eta\},\\
 E_2&=\{\xi\in\Xred:
             -\eta<w_-<0\},\\
 E_3&=\{\xi\in\Xred:
             1<w_+<1+\eta\},\\
 E_4&=\{\xi\in\Xred:
             -1<w_-<-1+\eta\}.
\end{align*}
By Lemma~\ref{lem:return},
the return time is greater than
$\log(1/\eta)$ on $E_1$,
and greater than
$\frac12\log(1/\eta)$
on both $E_2$ and $E_3$.
Thus
\[
\begin{aligned}
 \#(\Xred_\Delta\cap E_1)\le\frac{2hR_K}{\log(1/\eta)},\qquad
 \#(\Xred_\Delta\cap E_i)\le\frac{4hR_K}{\log(1/\eta)}
 \quad (i=2,3).
\end{aligned}
\]
It remains to control $E_4$.
Suppose that $\Phi(\xi)\in E_4$.
Then
\[
 -1<w_-(\Phi(\xi))<-1+\eta.
\]
Since
$w_-(\Phi(\xi))
 =\frac1{w_--n}$ and $n=\lfloor w_+\rfloor$,
we have $n=1$.
Consequently,
\[
 w_-
 =1+\frac1{w_-(\Phi(\xi))}\in
 \left(-\frac{\eta}{1-\eta},0\right)
 \subset(-2\eta,0).
\]
Therefore,
\[
 \Phi^{-1}(E_4)
 \subset\{\xi\in\Xred:
             -2\eta<w_-<0\}.
\]
Since $\Phi$ is a bijection
on the finite set $\Xred_\Delta$,
we obtain
\[
\begin{aligned}
 \#(\Xred_\Delta\cap E_4)
 &=\#\{\xi\in\Xred_\Delta:
       \Phi(\xi)\in E_4\}\\
 &\le
 \#\{\xi\in\Xred_\Delta:
       -2\eta<w_-<0\}\\
 &\le
 \frac{4hR_K}{\log(1/(2\eta))}.
\end{aligned}
\]
The last inequality follows
from Lemma~\ref{lem:return}
and the counting bound above.

Combining the four estimates,
we obtain
\[
 \#(\Xred_\Delta\setminus K_\eta)
 \le2hR_K\,e(\eta),
\]
where
\[
 e(\eta)=
 \frac5{\log(1/\eta)}
 +\frac2{\log(1/(2\eta))}.
\]
In particular,
\[
 e(\eta)\longrightarrow0
 \qquad(\eta\to0^+).
\]

\medskip
\noindent
\emph{Step 3. Conclusion.}
Let $B\subset\Xred$ be a Borel set
whose boundary in $\Xred$ is $\nu$-null,
as in the statement of the proposition.
Put
\[
 B_\eta=B\cap K_\eta.
\]
Since $K_\eta$ is a compact rectangle
contained in $\Xred$, the set $B_\eta$
has compact closure in $\Xred$.
The boundary of $K_\eta$ consists
of four line segments and hence has
two-dimensional Lebesgue measure zero.
Since $\nu$ is absolutely continuous
with respect to Lebesgue measure,
we have $\nu(\partial K_\eta)=0$.
Moreover,
\[
 \partial B_\eta
 \subset \partial B\cup\partial K_\eta.
\]
By the assumption on $B$, it follows
that $\nu(\partial B_\eta)=0$.

By Step 1,
\[
 \frac{\#(\Xred_\Delta\cap B_\eta)}
 {2hR_K}
 \longrightarrow
 \frac{2\nu(B_\eta)}{\Lv(Y)}.
\]
On the other hand, Step 2 gives
\[
 0\le
 \frac{
 \#(\Xred_\Delta\cap
       (B\setminus K_\eta))
 }{2hR_K}
 \le e(\eta).
\]
We also have
\[
 0\le\nu(B\setminus K_\eta)
 \le\nu(\Xred\setminus K_\eta).
\]
Combining these estimates,
we obtain
\[
\limsup_{\Delta\to\infty}
 \left|
 \frac{\#(\Xred_\Delta\cap B)}
 {2hR_K}
 -\frac{2\nu(B)}{\Lv(Y)}
 \right|\le
 e(\eta)
 +\frac{2}{\Lv(Y)}
 \nu(\Xred\setminus K_\eta).
\]
Since
$e(\eta)\to0$ and $\nu(\Xred\setminus K_\eta)\to0$
as $\eta\to0^+$, we conclude that
\[
 \frac{\#(\Xred_\Delta\cap B)}
 {2hR_K}
 \longrightarrow
 \frac{2\nu(B)}{\Lv(Y)}
\]
for every $B$ satisfying
the assumptions of the proposition.
Taking $B=\Xred$ and using
$\nu(\Xred)=\log2$, we obtain
\[
 \frac{\kappa(\Delta)}{2hR_K}
 \longrightarrow
 \frac{2\log2}{\Lv(Y)}.
\]
The hyperbolic area of
$\PSL_2(\Z)\backslash\HH$ is $\pi/3$.
Since $\PSL_2(\Z)$ has index two in
$\Gamma=\PGL_2(\Z)$, the hyperbolic
area of $\Gamma\backslash\HH$ is $\pi/6$.
Our normalization of the Liouville measure
gives angular fibers of volume $2\pi$.
Consequently,
\[
 \Lv(Y)=2\pi\cdot\frac{\pi}{6}
 =\frac{\pi^2}{3}.
\]
It follows that
\[
 \frac{\kappa(\Delta)}{hR_K}
 \longrightarrow
 \frac{4\log2}{\pi^2/3}
 =\frac{12\log2}{\pi^2}.
\]
Finally, dividing the general
limit by the preceding one gives
\[
 \frac{\#(\Xred_\Delta\cap B)}
 {\kappa(\Delta)}
 \longrightarrow
 \frac{\nu(B)}{\log2}.
\]
This completes the proof.
\end{proof}

\begin{proof}[Proof of Theorem~\ref{thm:intro-individual}]
By Lemma~\ref{lem:linear-coordinates},
we have
\[
 \ECal(\Delta)
 =\#(\Xred_\Delta\cap\Xex).
\]
Similarly, the number of reduced ideals
satisfying $\lfloor f\rfloor=1$ is
$\#(\Xred_\Delta\cap\Xone)$.
The boundaries of $\Xex$ and $\Xone$
in $\Xred$ are a curve and a line segment,
respectively.
Both are $\nu$-null.
Thus Proposition~\ref{prop:equidistribution}
and Lemma~\ref{lem:measures} give
$\frac{\ECal(\Delta)}{\kappa(\Delta)}
 \rightarrow\lex$ and $\frac{\#(\Xred_\Delta\cap\Xone)}
 {\kappa(\Delta)}
 \rightarrow
 \frac{\log(4/3)}{\log2}$.
Consequently,
\[
 \frac{\ECal(\Delta)}
 {\#(\Xred_\Delta\cap\Xone)}
 \longrightarrow\frac12.
\]
Proposition~\ref{prop:equidistribution}
gives
$\kappa(\Delta)
 \sim\frac{12\log2}{\pi^2}hR_K$.
Combining this with
$\ECal(\Delta)/\kappa(\Delta)\to\lex>0$,
we obtain
\[
 \ECal(\Delta)
 \sim\lex\frac{12\log2}{\pi^2}hR_K
 =\frac{6\log(4/3)}{\pi^2}hR_K.
\]
The analytic class number formula
for real quadratic fields
\cite[pp.~43--53]{DavenportMNT}
gives
\[
 hR_K=\frac12\sqrt\Delta\,
 L(1,\chi_\Delta).
\]
Combining this with the preceding
asymptotic formula, we obtain
\[
 \ECal(\Delta)
 \sim\frac{3\log(4/3)}{\pi^2}
 \sqrt\Delta\,L(1,\chi_\Delta).
\]
By Siegel's theorem
\cite[pp.~126--131]{DavenportMNT},
for every $\varepsilon>0$ we have
\[
 L(1,\chi_\Delta)
 \gg_\varepsilon\Delta^{-\varepsilon},
\]
with an ineffective implied constant.
Since $3\log(4/3)/\pi^2>0$,
it follows that
\[
\begin{aligned}
 \ECal(\Delta)
 &\gg \sqrt\Delta\,L(1,\chi_\Delta)\\
 &\gg_\varepsilon
 \sqrt\Delta\,\Delta^{-\varepsilon}\\
 &=\Delta^{1/2-\varepsilon}.
\end{aligned}
\]
This completes the proof.
\end{proof}

The following lemma gives an explicit
formula for the tail measure of the
defect distribution.

\begin{lemma}\label{lem:defect-tail}
For $t\ge0$, we put
\[G(t)=\nu\bigl(\{\xi\in\Xex:\dfn(\xi)>t\}\bigr).\]
We set
$t_{\max}=\frac1{\sqrt3}-\frac12$.
For $0<t<t_{\max}$, we define
$u_\pm(t)
 =\frac{2t+3\pm\sqrt{1-12t-12t^2}}
 {2(2t+1)}$, and
$F_t(u)
 =(1+t)\log u-t\log(u-1)
 -\frac12\log(u^2-u+1)$.
Then
\[
 G(t)=
 \begin{cases}
 \displaystyle\frac12\log\frac43,
   & t=0,\\[6pt]
 F_t(u_+(t))-F_t(u_-(t)),
   & 0<t<t_{\max},\\[4pt]
 0,
   & t\ge t_{\max}.
 \end{cases}
\]
Moreover, as $t\to0^+$ we have
\[G(0)-G(t)
 =t\log(1/t)+(1-2\log2)t+o(t).\]
\end{lemma}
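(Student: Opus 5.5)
We compute $G(t)$ by the same fibrewise integration used in Lemma~\ref{lem:measures}, writing $u=w_+$ and $v=w_-$.

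\emph{Fibre description.} By Lemma~\ref{lem:measures}, $\Xex$ is the set of $(u,v)$ with $1<u<2$ and $\varphi(u)<v<0$. There $\dfn$ is strictly decreasing in $v$, with $\dfn(u,0)=0$. So for fixed $u$ the superlevel set $\{\dfn>t\}$ is an interval $\varphi(u)<v<v_t(u)$. Solving $(u-1)(-v)=t(u-v)$ gives
\[
 v_t(u)=-\frac{tu}{u-1-t}.
\]
This interval is nonempty exactly when $h(u)>t$, with $h$ as in the proof of Lemma~\ref{lem:measures}. The condition $h(u)>t$ reads
\[
 (u-1)(2-u)>2t(u^2-u+1),
\qquad\text{i.e.}\qquad
 (2t+1)u^2-(2t+3)u+(2t+2)<0 .
\]
The roots of this quadratic are $u_\pm(t)$, real precisely when $1-12t-12t^2>0$, that is, when $t<t_{\max}$. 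On the relevant range one checks $u-1-t>0$, since $h(u)>t$ forces $u>1+t$. This yields the case split: $G(0)=\nu(\Xex)=\tfrac12\log\tfrac43$ from Lemma~\ref{lem:measures}, and $G(t)=0$ for $t\ge t_{\max}$ because $\sup_{\Xex}\dfn=t_{\max}$ is not attained.

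\emph{Integration.} For $0<t<t_{\max}$,
\[
 G(t)=\int_{u_-}^{u_+}\Bigl(\frac1{u-v_t(u)}-\frac1{u-\varphi(u)}\Bigr)\,du .
\]
We have $u-v_t(u)=u(u-1)/(u-1-t)$, so
\[
 \frac1{u-v_t(u)}=\frac{u-1-t}{u(u-1)}=\frac{1+t}{u}-\frac{t}{u-1},
\]
and, as in Lemma~\ref{lem:measures}, $1/(u-\varphi(u))=(2u-1)/(2(u^2-u+1))$. The antiderivative of the integrand is then exactly $F_t$, which gives $G(t)=F_t(u_+)-F_t(u_-)$. This part is routine.

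\emph{Asymptotics.} I would avoid expanding $F_t(u_\pm)$ directly. Instead I would write
\[
 G(0)-G(t)=\nu\{\xi\in\Xex:\dfn(\xi)\le t\}
\]
and compute it fibrewise. The fibre measure is
\[
 \min\Bigl(\frac1{u-\varphi(u)},\,\frac1{u-v_t(u)}\Bigr)-\frac1{u-\varphi(u)}+\Bigl(\frac1u-\frac1{u-v_t(u)}\Bigr)_{\!+},
\]
which equals
\[
 \frac1u-\frac1{u-v_t(u)}=\frac{t}{u-1}-\frac{t}{u}
\]
on $[u_-,u_+]$, and equals the full fibre $\frac1u-\frac{2u-1}{2(u^2-u+1)}$ outside it. As $t\to0$, $u_-(t)=1+t+O(t^2)$ and $u_+(t)=2-2t+O(t^2)$, obtained by expanding the root formula.

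The main contribution is
\[
 t\int_{u_-}^{u_+}\Bigl(\frac1{u-1}-\frac1u\Bigr)du
 =t\bigl[\log(u_+-1)-\log(u_--1)-\log u_++\log u_-\bigr]
 =t\log(1/t)-t\log2+O(t^2\log(1/t)).
\]
The two end pieces $[1,u_-]$ and $[u_+,2]$ have lengths $\sim t$ and $\sim 2t$. On them the full fibre measure is approximately its endpoint value: at $u=1$ it is $1-\tfrac12=\tfrac12$, and at $u=2$ it is $\tfrac12-\tfrac12=0$. The end pieces therefore contribute $\tfrac12 t$ and $o(t)$; I would double-check the endpoint constants here against the claimed $(1-2\log2)$.

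Summing gives $t\log(1/t)+(\tfrac12-\log2)t$ from these crude values. Since the statement asks for $1-2\log2$, I expect the hardest step to be getting the linear term exactly right. This requires second-order expansions of $u_\pm$ and careful treatment of the $\log(u_--1)$ term, where $u_--1=t+O(t^2)$ enters inside a logarithm multiplied by $t$. I would redo the expansion directly from $F_t(u_+)-F_t(u_-)$ as a cross-check, expanding $F_0(u_\pm)$ and the $t$-dependent terms separately, and reconcile the two computations.
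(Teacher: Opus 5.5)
Your derivation of the exact formula is the paper's argument: the fibrewise integration, the cutoff $v_t(u)=-tu/(u-1-t)$, the roots $u_\pm$, and the antiderivative $F_t$ all match. Your complement-measure route to the asymptotics is also sound, and it is the paper's computation rearranged. With $F_0(u)=\log u-\frac12\log(u^2-u+1)$, the end pieces $\int_1^{u_-}F_0'$ and $\int_{u_+}^2F_0'$ are the Taylor increments of $F_0$ at $u_\pm$, and the middle integral is $-t\bigl[\log\frac{u}{u-1}\bigr]_{u_-}^{u_+}$.

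The gap is that your expansions of $u_\pm$ are wrong at first order. As a result you obtain the linear constant $\frac12-\log2$ and leave the conflict with the statement unresolved. Your diagnosis, that second-order expansions are needed, is also wrong: second-order terms in $u_\pm$ affect only $O(t^2)$. From $\sqrt{1-12t-12t^2}=1-6t+O(t^2)$ one gets
\[
 u_-(t)=\frac{1+4t}{1+2t}+O(t^2)=1+2t+O(t^2),\qquad
 u_+(t)=\frac{2-2t}{1+2t}+O(t^2)=2-6t+O(t^2).
\]
This also follows from $h(u)\sim\frac{u-1}{2}$ as $u\to1^+$ and $h(u)\sim\frac{2-u}{6}$ as $u\to2^-$. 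So the end pieces have lengths about $2t$ and $6t$, not $t$ and $2t$.

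With the corrected values, your three pieces are as follows.
\begin{itemize}
\item The middle term is $t\bigl[\log(u_+-1)-\log(u_--1)-\log u_++\log u_-\bigr]=t\log(1/t)-2t\log2+O(t^2)$, because $\log(u_--1)=\log(2t)+O(t)$.
\item The left end piece is $\int_1^{u_-}F_0'(u)\,du=\frac12\cdot 2t+O(t^2)=t+O(t^2)$, since $F_0'(u)=\frac{2-u}{2u(u^2-u+1)}$ gives $F_0'(1)=\frac12$.
\item The right end piece is $O(t^2)$, because $F_0'(2)=0$.
\end{itemize}
Summing gives $G(0)-G(t)=t\log(1/t)+(1-2\log2)t+O(t^2)$, as claimed. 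The method works, but as submitted the proposal does not establish the stated linear term until this first-order slip is fixed.
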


\begin{proof}
Write $u=w_+$ and $w=-w_-$,
so that $\xi=(u,-w)$.
Then
$d\nu=\frac{du\,dw}{(u+w)^2}$.
In these coordinates, $\Xex$ consists of all
points $\xi=(u,-w)$ satisfying $1<u<2$ and
$0<w<(2-u)/(2u-1)$.
The defect function is given by
\[\dfn(u,-w)=\frac{(u-1)w}{u+w}.\]

Suppose first that $t>0$.
For fixed $u\in(1,2)$, 
since
$\frac{\partial}{\partial w}
 \dfn(u,-w)
 =\frac{u(u-1)}{(u+w)^2}>0$,
we have that $\dfn(u,-w)$ is strictly increasing
with respect to $w$.
Since
$0<w<\frac{2-u}{2u-1}$,
the supremum of $\dfn(u,-w)$
on this interval is
\[
 h(u):=\frac{(u-1)(2-u)}
 {2(u^2-u+1)}.
\]
Consequently, the vertical slice
satisfying $\dfn(u,-w)>t$ is nonempty
if and only if $h(u)>t$.
This inequality is equivalent to
\[
 (2t+1)u^2-(2t+3)u+2(t+1)<0.
\]
The discriminant of this quadratic
polynomial is
\[
 1-12t-12t^2.
\]
It is positive precisely when
$0<t<t_{\max}$.
In this range, the two roots are
$u_-(t)$ and $u_+(t)$.
Consequently, the slice is nonempty
exactly for
\[
 u_-(t)<u<u_+(t).
\]
For such $u$, the inequality
$h(u)>t$, together with
$h(u)<u-1$, implies $u>1+t$.
Therefore, the condition
$\dfn(u,-w)>t$ is equivalent to
\[
 w>\frac{tu}{u-1-t}.
\]
Thus the required region is
described by
\[
 u_-(t)<u<u_+(t),
 \qquad
 \frac{tu}{u-1-t}
 <w<\frac{2-u}{2u-1}.
\]
Integrating first with respect to
$w$, we obtain
\[
\begin{aligned}
 \int_{tu/(u-1-t)}^{(2-u)/(2u-1)}
 \frac{dw}{(u+w)^2}&=
 \frac{u-1-t}{u(u-1)}
 -\frac{2u-1}{2(u^2-u+1)}\\
 &=\frac{d}{du}F_t(u).
\end{aligned}
\]
Therefore,
\[
\begin{aligned}
 G(t)
 &=\int_{u_-(t)}^{u_+(t)}
 \frac{d}{du}F_t(u)\,du\\
 &=F_t(u_+(t))-F_t(u_-(t)).
\end{aligned}
\]
If $t\ge t_{\max}$, the quadratic
inequality has no solution.
Thus $G(t)=0$.
For $t=0$, Lemma~\ref{lem:measures}
gives
\[
 G(0)=\nu(\Xex)=\frac12\log\frac43.
\]

It remains to establish the
asymptotic expansion as $t\to0^+$.
Since
\[
 \sqrt{1-12t-12t^2}
 =1-6t+O(t^2),
\]
we have
\[
 u_-(t)=1+2t+O(t^2),
 \qquad
 u_+(t)=2-6t+O(t^2).
\]
We write
$F_t(u)=F_0(u)+t\log\frac{u}{u-1}$,
where
\[
 F_0(u)=\log u
 -\frac12\log(u^2-u+1).
\]
Differentiating gives
$F_0'(u)
 =\frac{2-u}{2u(u^2-u+1)}$.
In particular,
$F_0'(1)=\frac12$ and $F_0'(2)=0$.
Taylor expansions at $u=1$ and
$u=2$ give
\[
 F_0(u_-(t))=F_0(1)+t+O(t^2),\qquad
 F_0(u_+(t))=F_0(2)+O(t^2).
\]
Since
$F_0(2)-F_0(1)=G(0)$,
we obtain
\[
 F_0(u_+(t))-F_0(u_-(t))
 =G(0)-t+O(t^2).
\]
On the other hand, the expansions
of $u_-(t)$ and $u_+(t)$ imply
\[
 \log\frac{u_+(t)}{u_+(t)-1}=\log2+O(t),\qquad
 \log\frac{u_-(t)}{u_-(t)-1}=-\log(2t)+O(t).
\]
Substituting these estimates into
\[
 G(t)
 =F_0(u_+(t))-F_0(u_-(t))
 +t\log\frac{u_+(t)}{u_+(t)-1}
 -t\log\frac{u_-(t)}{u_-(t)-1},
\]
we obtain
\[
\begin{aligned}
 G(t)
 &=G(0)-t
   +t\log2+t\log(2t)+O(t^2)\\
 &=G(0)-t+t\log(4t)+O(t^2).
\end{aligned}
\]
Hence
$G(0)-G(t)
 =t\log(1/t)
 +(1-2\log2)t+O(t^2)$,
which completes the proof.
\end{proof}

\begin{proof}[Proof of Theorem~\ref{thm:intro-defect}]
For $s\ge0$, put
\[
 B_s=\{\xi\in\Xex:\dfn(\xi)>s\}.
\]
Let $N_\Delta(t)$ denote the number of
reduced ideals $I$ that are not $1$-reduced
and satisfy $k(I)>t\sqrt\Delta$.
By Lemma~\ref{lem:linear-coordinates}(iv),
every reduced ideal $I$ that is not
$1$-reduced satisfies
\[
 \dfn(\xi_I)-\Delta^{-1/2}
 <\frac{k(I)}{\sqrt\Delta}
 <\dfn(\xi_I).
\]
Fix $t\ge0$.
If $\xi_I\in B_{t+\Delta^{-1/2}}$,
then
$\dfn(\xi_I)>t+\Delta^{-1/2}$.
Therefore,
\[
 \frac{k(I)}{\sqrt\Delta}
 >\dfn(\xi_I)-\Delta^{-1/2}>t.
\]
This implies that $k(I)>t\sqrt\Delta$,
and hence $I$ contributes to $N_\Delta(t)$.
Conversely, if $I$ is counted
by $N_\Delta(t)$, then
\[
 \dfn(\xi_I)
 >\frac{k(I)}{\sqrt\Delta}>t.
\]
Thus $\xi_I\in B_t$.
Since the points of
$\Xred_\Delta\cap\Xex$ correspond
to the reduced ideals that are not
$1$-reduced, we obtain
\[
 \#(\Xred_\Delta\cap B_{t+\Delta^{-1/2}})
 \le N_\Delta(t)
 \le \#(\Xred_\Delta\cap B_t).
\]

We next verify that
Proposition~\ref{prop:equidistribution}
applies to $B_s$ for every fixed $s\ge0$.
The boundary of $\Xex$ in $\Xred$
is a smooth curve and therefore has
two-dimensional Lebesgue measure zero.
Moreover, writing $u=w_+$ and
$w=-w_-$, we have
\[
 \frac{\partial}{\partial w}
 \dfn(u,-w)
 =\frac{u(u-1)}{(u+w)^2}>0.\]
Thus, for each fixed $u>1$, the
equation $\dfn(u,-w)=s$ has
at most one solution in $w$.
By Fubini's theorem, the level set
\[
 \{\xi\in\Xred:\dfn(\xi)=s\}
\]
has two-dimensional Lebesgue
measure zero.
Since $\nu$ is absolutely continuous
with respect to Lebesgue measure,
both sets are $\nu$-null.
Furthermore, the continuity of
$\dfn$ implies
\[
 \partial B_s
 \subset
 \partial\Xex
 \cup
 \{\xi\in\Xred:\dfn(\xi)=s\},
\]
where the boundaries are taken
in $\Xred$.
Consequently,
\[
 \nu(\partial B_s)=0.
\]
By Proposition~\ref{prop:equidistribution},
we obtain
$\frac{\#(\Xred_\Delta\cap B_s)}
 {\kappa(\Delta)}
 \rightarrow
 \frac{\nu(B_s)}{\log2}
 =\frac{G(s)}{\log2}$.
Theorem~\ref{thm:intro-individual}
gives
$\frac{\ECal(\Delta)}{\kappa(\Delta)}
\rightarrow
 \frac{G(0)}{\log2}>0$.
Combining these two limits, we obtain
\[
 \frac{\#(\Xred_\Delta\cap B_s)}
 {\ECal(\Delta)}
 \longrightarrow
 \frac{G(s)}{G(0)}.
\]
Now fix $t\ge0$ and $\varepsilon>0$.
For sufficiently large $\Delta$,
we have $\Delta^{-1/2}<\varepsilon$.
Since
$B_{t+\varepsilon}
\subset B_{t+\Delta^{-1/2}}$,
we have
\[
\begin{aligned}
 \#(\Xred_\Delta\cap B_{t+\varepsilon})
 &\le
 \#(\Xred_\Delta\cap B_{t+\Delta^{-1/2}})\\
 &\le N_\Delta(t)\\
 &\le \#(\Xred_\Delta\cap B_t).
\end{aligned}
\]
Dividing by $\ECal(\Delta)$
and letting $\Delta\to\infty$,
we obtain
\begin{equation}\label{eq:defect-squeeze}
\begin{aligned}
 \frac{G(t+\varepsilon)}{G(0)}
 &\le
 \liminf_{\Delta\to\infty}
 \frac{N_\Delta(t)}{\ECal(\Delta)}\\
 &\le
 \limsup_{\Delta\to\infty}
 \frac{N_\Delta(t)}{\ECal(\Delta)}
 \le\frac{G(t)}{G(0)}.
\end{aligned}
\end{equation}
Let $s_n\ge0$ and suppose that
$s_n\to s\ge0$.
Fix $\xi\in\Xex$ satisfying
$\dfn(\xi)\ne s$.
Since $s_n\to s$, the inequalities
$\dfn(\xi)>s_n$ and $\dfn(\xi)>s$
have the same truth value for all
sufficiently large $n$.
Hence the indicator functions
$\mathbf 1_{\{\dfn>s_n\}}$
converge pointwise to
$\mathbf 1_{\{\dfn>s\}}$
on $\Xex\setminus\{\dfn=s\}$.
Since the level set
$\{\xi\in\Xex:\dfn(\xi)=s\}$
is $\nu$-null, this convergence
holds $\nu$-almost everywhere
on $\Xex$.
Moreover, these indicator functions
are bounded by the constant function
$1$, which is $\nu$-integrable on
$\Xex$ since $\nu(\Xex)<\infty$.
Since
\[
 G(s_n)
 =\int_{\Xex}
 \mathbf 1_{\{\dfn>s_n\}}\,d\nu,
\]
the dominated convergence theorem gives
\[
\begin{aligned}
 \lim_{n\to\infty}G(s_n)
 &=\int_{\Xex}
   \mathbf 1_{\{\dfn>s\}}\,d\nu\\
 &=G(s).
\end{aligned}
\]
Thus $G$ is continuous on $[0,\infty)$.
Since $G$ is continuous, letting
$\varepsilon\to0^+$ in
\eqref{eq:defect-squeeze} shows that
the lower and upper limits coincide.
Thus
\[
 \frac{N_\Delta(t)}{\ECal(\Delta)}
 \longrightarrow
 \frac{G(t)}{G(0)}.
\]
By the definition of $N_\Delta(t)$,
we have
\[
\begin{aligned}
 &\frac{
 \#\{I\text{ reduced, not $1$-reduced}:
 k(I)\le t\sqrt\Delta\}
 }{\ECal(\Delta)}=
 1-\frac{N_\Delta(t)}{\ECal(\Delta)}
 \longrightarrow
 1-\frac{G(t)}{G(0)}.
\end{aligned}
\]
Thus the limiting distribution function
is $1-G(t)/G(0)$ for $t\ge0$.
By Lemma~\ref{lem:defect-tail},
we have $G(t)=0$ for $t\ge t_{\max}$.
Since the normalized defect is
nonnegative, the limiting distribution
is supported on $[0,t_{\max}]$.

Finally, Lemma~\ref{lem:defect-tail}
gives
$G(0)-G(t)
 =t\log(1/t)+(1-2\log2)t+o(t)$
as $t\to0^+$.
Using
$G(0)=\frac12\log\frac43$,
we obtain
\[
\begin{aligned}
 1-\frac{G(t)}{G(0)}
 &=\frac{G(0)-G(t)}{G(0)}\\
 &=\frac{2}{\log(4/3)}
 \left(
 t\log\frac1t
 +(1-2\log2)t
 \right)+o(t).
\end{aligned}
\]
This completes the proof.

\end{proof}

\section*{Declaration of competing interests}

The authors declare no competing interests.

\section*{Funding sources}

This research did not receive any specific grant
from funding agencies in the public, commercial,
or not-for-profit sectors.

\end{document}